\documentclass[a4paper]{amsart}

\usepackage[utf8]{inputenc}
\usepackage[T1]{fontenc}
\usepackage{amsfonts}
\usepackage{amsmath}
\usepackage{amssymb}
\usepackage{amstext}
\usepackage{amsthm}
\usepackage[shortlabels]{enumitem}
\usepackage{geometry}
\usepackage{fullpage}
\usepackage{hyperref}
\usepackage{cleveref}
\usepackage{mathtools}
\usepackage{tikz}
\usetikzlibrary{matrix,arrows,decorations.pathmorphing}
\usepackage{pgf}
\usepackage{xcolor}
\usepackage{shuffle}
\usepackage{ifthen}
\usepackage{footmisc}
\usepackage{pifont} 
\usepackage{caption} 

\usepackage{algorithm}
\usepackage{algpseudocode}

\algrenewcommand\algorithmicrequire{\textbf{Input:}}
\algrenewcommand\algorithmicensure{\textbf{Output:}}

\newtheorem{theorem}{Theorem}[section]
\newtheorem{lemma}[theorem]{Lemma}

\newtheorem{corollary}[theorem]{Corollary}
\newtheorem{proposition}[theorem]{Proposition}

\theoremstyle{definition}
\newtheorem{definition}[theorem]{Definition}
\newtheorem{example}[theorem]{Example}

\theoremstyle{remark}
\newtheorem{remark}[theorem]{Remark}

\numberwithin{equation}{section}

\title[Shuffle theorem and sandpiles]{On a bijection of Loehr and Remmel}

\author{Michele D'Adderio}
\address{Universit\`a di Pisa\\Dipartimento di Matematica\\ Largo Bruno Pontecorvo 5, 56127 Pisa\\ Italy}\email{michele.dadderio@unipi.it}

\author{Alessio Sgubin}
\address{LIGM, Universit\'e Gustave-Eiffel\\ CNRS, ENPC, ESIEE-Paris\\ 5 Boulevard Descartes,
	Champs-sur-Marne, 77454 Marne-la-Vall\'ee cedex 2\\ FRANCE}\email{a.sgubin@studenti.unipi.it }

\newcommand{\area}{\mathsf{area}}
\newcommand{\bounce}{\mathsf{bounce}}
\newcommand{\dinv}{\mathsf{dinv}}
\newcommand{\pmaj}{\mathsf{pmaj}}

\newcommand{\maj}{\mathsf{maj}}

\newcommand{\PF}{\mathsf{PF}}

\DeclareMathOperator{\Dyck}{Dyck}
\DeclareMathOperator{\Dinv}{Dinv}
\DeclareMathOperator{\rev}{rev}

\def\Sy{\mathfrak{S}}
\def\ldread{\ensuremath{<_{\text{d-read}}}}
\def\lread{\ensuremath{<_{\text{d-read}}}}
\def\wdread{\ensuremath{w_{\text{d-read}}}}
\def\wpread{\ensuremath{w_{\text{p-read}}}}
\def\PFp{\ensuremath{\overline{\text{PF}}}}

\newcommand{\defeq}{\vcentcolon=}

\newcommand{\dyckpath}[3]{
	\pgfmathsetmacro{\unit}{1}
	\pgfmathsetmacro{\n}{#1}
	\pgfmathsetmacro{\k}{#2}
	\pgfmathsetmacro{\nk}{#1 * #2}
	\foreach \row in {1,...,#1}{
		\foreach \h in {1,...,#2}{
			\fill[yellow, nearly transparent] (\row * \k * \unit + \h*\unit - \k*\unit - \unit, \row*\unit - \unit) rectangle (\row * \k *\unit + \h*\unit - \k*\unit, \row*\unit);
		}
	}
	\foreach \xcoord in {0,...,\nk}{
		\draw[gray] (\xcoord*\unit,0) -- (\xcoord*\unit,\n*\unit);
	}
	\foreach \ycoord in {0,...,#1}{
		\draw[gray] (0,\ycoord*\unit) -- (\nk*\unit,\ycoord*\unit);
	}
	\foreach \col [count=\row] in {#3}{
		\draw[red, line width = 1.6pt] (\nk*\unit - \col*\unit,\row*\unit - \unit) -- (\nk*\unit - \col*\unit,\row*\unit);
	}
	\coordinate (R0C0) at (0,0);
	\foreach \col [count=\row] in {#3}{
		\draw[red,line width = 1.6pt] (\nk*\unit - \col*\unit,\row*\unit - \unit) -- (\nk*\unit - \col*\unit,\row*\unit);
		
		\coordinate (R\row C\row) at (\nk*\unit - \col*\unit,\row*\unit);
		\pgfmathtruncatemacro{\minus}{\row - 1};
		\draw[red,line width = 1.6pt] (R\minus C\minus) -- (\nk*\unit - \col*\unit,\row*\unit - \unit);
	}
	\draw[red,line width = 1.6pt] (R\n C\n) -- (\nk*\unit,\n*\unit);
}

\newcommand{\parkfunc}[5]{
	\pgfmathsetmacro{\unit}{#5}
	\pgfmathsetmacro{\sizefont}{#5 / 0.7}
	\pgfmathsetmacro{\n}{#1}
	\pgfmathsetmacro{\k}{#2}
	\pgfmathsetmacro{\nk}{#1 * #2}
	\foreach \row in {1,...,#1}{
		\foreach \h in {1,...,#2}{
			\fill[yellow, nearly transparent] (\row * \k * \unit + \h*\unit - \k*\unit - \unit, \row*\unit - \unit) rectangle (\row * \k *\unit + \h*\unit - \k*\unit, \row*\unit);
		}
	}
	\foreach \xcoord in {0,...,\nk}{
		\draw[gray] (\xcoord*\unit,0) -- (\xcoord*\unit,\n*\unit);
	}
	\foreach \ycoord in {0,...,#1}{
		\draw[gray] (0,\ycoord*\unit) -- (\nk*\unit,\ycoord*\unit);
	}
	\foreach \col [count=\row] in {#3}{
		\draw[red,very thick] (\nk*\unit - \col*\unit,\row*\unit - \unit) -- (\nk*\unit - \col*\unit,\row*\unit);
	}
	\coordinate (R0C0) at (0,0);
	\foreach \col [count=\row] in {#3}{
		\draw[red,very thick] (\nk*\unit - \col*\unit,\row*\unit - \unit) -- (\nk*\unit - \col*\unit,\row*\unit);
		\coordinate (R\row C\row) at (\nk*\unit - \col*\unit,\row*\unit);
		\coordinate (L\row) at (\nk*\unit - \col*\unit + .5*\unit,\row*\unit - .5*\unit);
		\pgfmathtruncatemacro{\minus}{\row - 1}
		\draw[red,very thick] (R\minus C\minus) -- (\nk*\unit - \col*\unit,\row*\unit - \unit);
	}
	\draw[red,very thick] (R\n C\n) -- (\nk*\unit,\n*\unit);
	\foreach \label [count=\row] in {#4}{
		\node[red,thick,font=\bfseries,scale={\sizefont}] at (L\row) {\label};
	}
}

\begin{document}
	
\maketitle
	
{\centering \emph{Dedicated to the memory of Adriano Garsia}.\par}

\begin{abstract}
In this expository article we review a remarkable bijection $\phi_n$ due to Loehr and Remmel from the set of parking functions of size $n$ into itself, which sends the bistatistic $(\dinv,\area)$ into the bistatistic $(\area,\pmaj)$. The only novelty of the present work is our definition of $\phi_n$, which is more direct than the original one, hence easier to compute and to work with. 
\end{abstract}

\section{Introduction}

In a recent breakthrough~\cite{CarlssonMellitShuffle} Carlsson and Mellit gave a positive solution to the long-standing \emph{shuffle conjecture}~\cite{HHLRU-2005}, which gives a combinatorial formula for the Frobenius characteristic of the so-called diagonal harmonics. More precisely, this theorem provides the monomial expansion of the symmetric function $\nabla e_n$, where $e_n$ is the elementary symmetric function of degree $n$ in the variables $x_1,x_2,\dots$, and $\nabla$ is the famous \emph{nabla} operator introduced by Bergeron and Garsia in the 90's. In this formula, to each \emph{labelled Dyck path} in the $n\times n$ grid corresponds a monomial, where the variables $x_1,x_2,\dots$ keep track of the labels, while the variables $q$ and $t$ keep track of the bistatistic ($\dinv$, $\area$).
\smallskip

The statistic $\dinv$, discovered by Haiman, was originally defined only on ``unlabelled'' Dyck paths, and together with $\area$ provided a combinatorial interpretation of the famous $(q,t)$-Catalan $\langle \nabla e_n,e_n\rangle$ (here $\langle - ,-\rangle$ denotes the Hall scalar product on symmetric functions). At the same time Hanglund defined a statistic $\bounce$ on Dyck paths, that together with $\area$ provided another interpretation of the $(q,t)$-Catalan. These two interpretations can be proved to be the same by an explicit bijection, usually called $\zeta$, due to Haglund and Loehr~\cite{Haglund_Conjectured,Haglund_Loehr_Conjectured}, sending a Dyck path in the $n\times n$ grid into another one whose bistatistic $(\area,\bounce)$ coincides with the bistatistic $(\dinv,\area)$ of the original one. 

We notice here that both the statistic $\bounce$ and the bijection $\zeta$ have been extended first by Loehr~\cite{Loehr_HigherCatalan} to Dyck paths in the $n\times kn$ grid, and then to more general Dyck paths in a rectangular grid, the maps going under the collective name of \emph{sweep maps}, see e.g.~\cite{ArmstrongLoehrWarrington_Sweep,ThomasWilliams_Sweeping}.
\smallskip

In~\cite{LoehrRemmel_pmaj} Loehr and Remmel introduced a new statistic $\pmaj$ for labelled Dyck paths in the $n\times n$ grid, as an extension of the $\bounce$ to the labelled objects. In this way they provided an alternative combinatorial interpretation of $\nabla e_n$: their formula is in terms of the same objects, but using the bistatistic ($\area$, $\pmaj$). In fact, they proved that the two combinatorial formulas coincide, by defining a bijection sending the bistatistic $(\dinv,\area)$ to $(\area, \pmaj)$. This bijection is a far reaching extension  to labelled Dyck paths of the original $\zeta$ map. We like to stress that both the pmaj statistic and the bijection introduced by Loehr and Remmel are truly remarkable! 
\medskip

The goal of this expository article is to present this beautiful bijection, and prove its fundamental properties. The motivation to write it comes from the only original contribution that it contains: indeed we provide a new definition of the map, that we denote $\phi_n$, which is more direct than the original one, hence easier to compute, to implement, and more generally to work with.

Before diving into the bulk of the present article, we would like to make a few comments on both the pmaj statistic and our definition of the bijection $\phi_n$.
\medskip

A distinctive feature of the pmaj statistic is its ``algorithmic'' definition, in stark contrast to the more ``static'' area and dinv. In our opinion, the reinterpretation given in~\cite{DDILLV} of this statistic as a \emph{delay} statistic defined on sorted recurrent configuration of the sandpile model (a ``dynamical'' object) on complete graphs sheds light on the mysterious nature of this remarkable statistic.

In a forthcoming article~\cite{DAdderioSgubinNablakSandpile}, we will define a pmaj statistic on labelled Dyck paths in the $n\times kn$ grid with labels $[n]=\{1,2,\dots,n\}$, and we will provide a bijection $\phi_n^{(k)}$ of this set into itself, which extends $\phi_n$, i.e.\ $\phi_n=\phi_n^{(1)}$, and which has the same properties of $\phi_n=\phi_n^{(1)}$, i.e.\ it sends $(\dinv,\area)$ (cf.~\cite{Mellit_Rational}) into $(\area,\pmaj)$. This will provide a combinatorial formula for $\nabla^k e_n$ in terms of $(\area,\pmaj)$, which is new for $k\geq 2$. Moreover, in~\cite{DAdderioSgubinNablakSandpile}, we will provide also a sandpile interpretation, extending the main results in~\cite{DDILLV}. It should be noticed that the sandpile interpretation has been instrumental to find the pmaj in this more general setting, while our new definition of $\phi_n$ has been instrumental to find the extension $\phi_n^{(k)}$ for $k\geq 2$.

\subsection*{Acknowledgments}

D'Adderio is partially supported by PRIN 2022A7L229 ALTOP, by INDAM research group GNSAGA, and by ARC “From algebra to combinatorics, and back”. 

Sgubin is Co-Funded by the European Union. Views and opinions expressed are however those of the author(s) only and do not necessarily reflect those of the European Union. Neither the European Union nor the granting authority can be held responsible for them.

\section{Parking functions and their statistics}\label{sec:cl_definitions}

Throughout this article, $n \in \mathbb{N} \setminus\{0\}$ will be a positive natural number and we set $[n] \defeq \{1,\dots,n\}$. We start by defining parking functions and the three statistics involved in the Loehr-Remmel bijection. 

\begin{definition}\label{def:cl_parkfunc}
	A \emph{parking function} of size $n$ is a function $f: [n] \to [n]$ such that $|f^{-1}(\{1,\dots,m\})| \geq m$ for every index $m \in [n]$. We denote by $\PF_n$ the set of all parking functions of size $n$.
\end{definition}

\begin{example}\label{exa:cl_pf}
	Consider the function $f: [6] \to [6]$ defined by:
	\[
	\begin{array}{lllll}
		f(1) = 3 & \quad & f(2) = 3 & \quad & f(3) = 6\\
		f(4) = 1 & & f(5) = 1 & & f(6) = 3.
	\end{array}
	\]
	One can directly compute the pre-images and check the parking function condition
	\[
	\begin{array}{lllll}
		f^{-1}([1]) = \{4,5\} & \quad & f^{-1}([2]) = \{4,5\} & \quad & f^{-1}([3]) = \{1,2,4,5,6\}\\
		f^{-1}([4]) = \{1,2,4,5,6\} & \quad & f^{-1}([5]) = \{1,2,4,5,6\} & \quad & f^{-1}([6]) = \{1,2,3,4,5,6\},
	\end{array}
	\]
	thus $f \in \PF_6$.
\end{example}

Parking functions have a useful pictorial representation as labelled Dyck paths. 

\begin{definition}\label{def:cl_Dyckpath}
	A \emph{$n$-Dyck path} is a lattice path consisting of \emph{north steps} (going from a lattice point $(a,b)$ to the point $(a,b+1)$) and \emph{east steps} (going from a lattice point $(a,b)$ to $(a+1,b)$) going from $(0,0)$ to $(n,n)$ that never goes below the \emph{main diagonal} $y=x$. We denote by $\Dyck_n$ the set of all $n$-Dyck paths.\\
	The \emph{diagram} of a $n$-Dyck path is its picture in the $n\times n$ square grid that bounds it.
\end{definition}

The diagram of a $6$-Dyck path is shown in Figure~\ref{fig:cl_Dyckpaths}. The squares intersecting the main diagonal are shaded in yellow.
	\begin{figure}[ht]
		\centering
		\begin{tikzpicture}
			\node[scale=.8] at (0,0) {
				\begin{tikzpicture}
					\dyckpath{6}{1}{6,6,6,4,3,3}
					\draw[red,line width = 1.6pt] (0,0)|-(2,3)|-(3,4)|-(6,6);
				\end{tikzpicture}
			};
		\end{tikzpicture}
		\caption{An example of $6$-Dyck paths.}\label{fig:cl_Dyckpaths}
	\end{figure}

Given a $n$-Dyck path $D$, we will refer to the horizontal and vertical sequences of $n$ unit squares in its diagram as the \emph{rows} and the \emph{columns} of $D$, respectively.

\begin{definition}\label{def:cl_labelledDyck}
	A \emph{labelled} $n$-Dyck path is a Dyck path $D \in \Dyck_n$ where each north step is labelled by a positive integer, so that the labels of north steps on the same vertical line have increasing labels when read from bottom to top. The \emph{diagram} of a labelled $n$-Dyck path is the diagram of the associated Dyck path with the label of each vertical step appearing in the unit square to its right.
\end{definition}
\noindent The diagram of a labelled $6$-Dyck path is shown in Figure~\ref{fig:cl_exa_labDyck}.
\begin{figure}[htbp]
	\centering
	\begin{tikzpicture}
		\node[scale=.8] at (0,0) {
			\begin{tikzpicture}
				\parkfunc{6}{1}{6,6,6,4,3,3}{6,7,11,2,3,8}{1};
				\draw[red,line width = 1.6pt] (0,0)|-(2,3)|-(3,4)|-(6,6);
		\end{tikzpicture}};
	\end{tikzpicture}
	\caption{An example of labelled $6$-Dyck path.}\label{fig:cl_exa_labDyck}
\end{figure}
\smallskip

Finally, we can give the pictorial representation of parking functions.

\begin{remark}\label{rem:cl_pf_Dyck}
There is a natural bijection between parking functions of size $n$ and labelled $n$-Dyck paths with set of labels $[n]$: to a parking function $f \in \PF_n$ we associate the (unique) labelled $n$-Dyck path for which $f^{-1}(i)$ is the set of labels of its north steps on the line $x=i-1$ for all $i\in [n]$. Its inverse is also easy: if we number from $1$ to $n$ (from left to right) the columns of such a labelled $n$-Dyck path, then we associate to it the function sending each label in the number of its column.\smallskip\\
	We show an example of this correspondence in Figure~\ref{fig:cl_exa_PFDyck}.
	\begin{figure}[ht]
		\centering
		\begin{tikzpicture}
			\node at (0,0) {
				$\begin{array}{c}
					f(1) = 3\\
					f(2) = 3\\
					f(3) = 5\\
					f(4) = 1\\
					f(5) = 1\\
					f(6) = 3
				\end{array}$
			};
			\node at (3,0) {
				$\begin{array}{l}
					f^{-1}(1) = \{4,5\}\\
					f^{-1}(2) = \varnothing\\
					f^{-1}(3) = \{1,2,6\}\\
					f^{-1}(4) = \varnothing\\
					f^{-1}(5) = \{3\}\\
					f^{-1}(6) = \varnothing
				\end{array}$
			};
			\node at (6,0) {$\longleftrightarrow$};
			\node[scale=.8] at (10,0) {
				\begin{tikzpicture}
					\parkfunc{6}{1}{6,6,4,4,4,2}{4,5,1,2,6,3}{.7}
				\end{tikzpicture}
			};
		\end{tikzpicture}
		\caption{Example of the correspondence: from the parking function $f \in \PF_6$ to the labelled path.}
		\label{fig:cl_exa_PFDyck}
	\end{figure}
\end{remark}

\medskip

\underline{Warning}: For the rest of this article, we will identify parking functions of size $n$ and the labelled $n$-Dyck paths corresponding to them under the above bijection, referring to either of them as ``parking functions''.

\medskip

Now we can introduce the statistics relevant to the Loehr-Remmel bijection. The first one is classic.

\begin{definition}\label{def:cl_area}
Given a Dyck path $D \in \Dyck_n$, its \emph{area word} is the $n$-uple $w_{\area}(D) \defeq (w_1,\dots,w_n)$ where for each $i \in [n]$:
	\[
	w_i \defeq \#\text{ of integer squares in the $i^{\text{th}}$-row between the path and the main diagonal},
	\]
	where we number the rows of $D$ from $1$ to $n$, from bottom to top. We define the \emph{area} of $D \in \Dyck_n$ as
	\[
	\area(D) \defeq w_1 + \dots + w_n.
	\]
	Hence the area of a Dyck path is simply the number of squares between the path and the main diagonal.\\
	Given a parking function $f \in \PF_n$, we denote by $D(f)$ the Dyck path obtained by ignoring the labels of its vertical steps. We define the \emph{area word} and the \emph{area} of $f$, respectively, as
	\[
	w_{\area}(f) \defeq w_{\area}(D(f)) \qquad \qquad \area(f) \defeq \area(D(f)).
	\]
	Since rows are associated to the labels of a parking function, if the $i^{\text{th}}$-row has a north step labelled $\lambda_i \in [n]$ then we denote the \emph{row-area contribution of $\lambda_i$} by $a_{\lambda_i}(f) \defeq w_i$.
\end{definition}

\begin{example}\label{exa:cl_area}
	Consider the parking function $f \in \PF_6$ appearing in Figure~\ref{fig:cl_exa_PFDyck}. We compute its area word and $\area(f)$ in Figure~\ref{fig:cl_exa_area}.
	\begin{figure}[hbtp]
		\centering
		\begin{tikzpicture}
			\parkfunc{6}{1}{6,6,4,4,4,2}{4,5,1,2,6,3}{.7}
			\node[right] at (4.7, .35) {$w_1 = 0 = a_4(f)$};
			\node[right] at (4.7,1.05) {$w_2 = 1 = a_5(f)$};
			\node[right] at (4.7,1.75) {$w_3 = 0 = a_1(f)$};
			\node[right] at (4.7,2.45) {$w_4 = 1 = a_2(f)$};
			\node[right] at (4.7,3.15) {$w_5 = 2 = a_6(f)$};
			\node[right] at (4.7,3.85) {$w_6 = 1 = a_3(f)$};
			\node at (11,3) {$
				\begin{array}{rcl}
					w_{\area}(f) & = &  (0,1,0,1,2,1)\\
					 & \ & \\
					\area(f) & = & 0 + 1 + 0 + 1 + 2 + 1\\
					& = & 5
				\end{array}
				$};
		\end{tikzpicture}
		\caption{Computation of the area word and $\area$ statistic of a parking function.}\label{fig:cl_exa_area}
	\end{figure}
\end{example}

While the area statistic of a parking function $f\in \PF_n$ depends only on the corresponding Dyck path $D(f)$, the next two statistics that we introduce are going to depend on the labels of $f$ as well.
\medskip

The dinv statistic was introduced by Haiman.

\begin{definition}\label{def:cl_dinv}
Given a parking function $f \in \PF_n$, we define two conditions for unordered pairs of distinct labels $\{\lambda,\mu\} \subseteq [n]$ with $\lambda < \mu$:
	\begin{enumerate}[label=(\Alph*)]
		\item the labels satisfy $f(\lambda) < f(\mu)$ and $a_\lambda(f) = a_\mu(f)$,
		\item the labels satisfy $f(\lambda) > f(\mu)$ and $a_\lambda(f) + 1 = a_\mu(f)$.
	\end{enumerate}
	Then the \emph{diagonal inversion set} of $f$ is defined as
\[\Dinv(f) \defeq \big\{\{\lambda,\mu\} \subseteq [n]\text{ with  }\lambda<\mu \text{ that satisfy (A) or (B)}\big\},\]
while the \emph{dinv} of $f$ is defined as
\[\dinv(f) \defeq \# \Dinv(f).\]
\end{definition}

\begin{example}\label{exa:cl_dinv}
	Consider the parking function $f \in \PF_6$ from Figure~\ref{fig:cl_exa_area}. The set of diagonal inversions in this case is
	\[
	\Dinv(f) = \{ {\color{green!60!black} \{1,3\}}, {\color{blue} \{1,5\}}, {\color{blue} \{3,5\}} \}
	\]
	where we denote in green the diagonal inversions of type {\color{green!60!black} (A)} and in blue the ones of type {\color{blue} (B)}. Hence $\dinv(f)=3$.\\
	In Figure~\ref{fig:cl_exa_dinv} we give a representation of the types of diagonal inversions, using this specific example.
	\begin{figure}[ht]
		\centering
		\begin{tikzpicture}
			\node at (0,0) {\begin{tikzpicture}
					\parkfunc{6}{1}{6,6,4,4,4,1}{4,5,1,2,6,3}{.7}
					\draw[green!60!black,very thick] (1.75,1.75) circle (.3);
					\draw[green!60!black,very thick] (3.85,3.85) circle (.3);
					\draw[green!60!black,very thick] (2,2) -- (3.6,3.6);
			\end{tikzpicture}};
			\node at (5,0) {\begin{tikzpicture}
					\parkfunc{6}{1}{6,6,4,4,4,1}{4,5,1,2,6,3}{.7}
					\draw[blue,very thick] (.35,1.05) circle (.3);
					\draw[blue,very thick] (1.75,1.75) circle (.3);
					\draw[blue,very thick] (.6,.8) -- (.7,.7) -- (1.5,1.5);
			\end{tikzpicture}};
			\node at (10,0) {\begin{tikzpicture}
					\parkfunc{6}{1}{6,6,4,4,4,1}{4,5,1,2,6,3}{.7}
					\draw[blue,very thick] (.35,1.05) circle (.3);
					\draw[blue,very thick] (3.85,3.85) circle (.3);
					\draw[blue,very thick] (.6,.8) -- (.7,.7) -- (3.6,3.6);
			\end{tikzpicture}};
		\end{tikzpicture}
		\caption{All the diagonal inversions of a parking function $f \in \PF_6$}\label{fig:cl_exa_dinv}
	\end{figure}
\end{example}

The pmaj statistic was introduced by Loehr and Remmel in~\cite{LoehrRemmel_pmaj} as an extension to labelled Dyck paths of the \emph{bounce} statistic for Dyck paths introduced by Haglund. 

\begin{definition}\label{def:cl_pmaj}
Given a parking function $f \in \PF_n$, define a permutation $\sigma_{\pmaj}(f) = \sigma_1\sigma_2\cdots\sigma_n \in \Sy_n$ in one-line notation as follows.\\
Set $B_0' \defeq \varnothing$ and $\sigma_0 := n+1$. Then iterate for $m = 1,2,\dots,n$ the following steps.
	\begin{enumerate}[label=\roman*)]
		\item Set $B_m \defeq B_{m-1}' \cup f^{-1}(m)$.
		\item Consider $X_m \defeq \{a \in B_m \ | \ a < \sigma_{m-1}\}$. There are two cases: if $X_m \neq \varnothing$ then define $\sigma_m \defeq \max(X_m)$, otherwise set $\sigma_m \defeq \max(B_m)$.
		\item Set $B_m' \defeq B_m \setminus \{\sigma_m\}$.
	\end{enumerate}
	It is easy to check that this algorithm is well defined, i.e.\ it always terminates: by definition of parking functions the set $B_m$ can never be empty.\\
	We define the \emph{pmaj} of $f$ as
	\[
	\pmaj(f) \defeq \maj(\sigma_{\pmaj}(f)^{\rev})
	\]
	where $\sigma_{\pmaj}(f)^{\rev} \defeq \sigma_n\sigma_{n-1}\cdots\sigma_1$ (the reverse word), and for any permutation $\tau\in \Sy_n$
	\[\maj(\tau):=\sum_{i\in [n-1]\: :\: \tau(i)>\tau(i+1)}i \]
	is its \emph{major index}. In particular, for any given label $\lambda \in [n]$ we can define its \emph{pmaj contribution} to $f$ as:
	\[
	p_\lambda(f) \defeq \#\{\text{ascents to the left of $\lambda$ in $\sigma_{\pmaj}(f)$}\},
	\]
	where $i\in [n-1]$ is an \emph{ascent} of $\tau\in \Sy_n$ if $\tau(i)<\tau(i+1)$.\\
	Hence, by definition of major index, we have $\pmaj(f) = \sum\limits_{\lambda \in [n]}p_\lambda(f)$.
\end{definition}

\begin{example}\label{exa:cl_pmaj_idea}
Considering $f$ as a labelled Dyck path, we can think of running this algorithm by reading at the $m$-th iteration loop the labels in column $m$ (again, we number the columns from $1$ to $n$, from left to right), i.e.\ the numbers in $f^{-1}(m)$. In Figure~\ref{fig:cl_exa_pmaj} we describe a step-by-step computation of the $\pmaj$ of a parking function of size $6$.
	\begin{figure}[ht]
		\centering
		\begin{tikzpicture}
			\node at (0,0) {\begin{tikzpicture}
					\parkfunc{6}{1}{6,6,5,5,2,2}{1,4,3,5,2,6}{.7}
			\end{tikzpicture}};
			\node[left] at (-2.2,-2.5) {$\sigma_{\pmaj}(f)$:};
			\node at (-1.75,-2.5) {4};
			\node at (-1.05,-2.5) {3};
			\node at (-.35,-2.5) {1};
			\draw[blue,very thick] (0,-2.3) -- (0,-2.8);
			\node at (.35,-2.5) {5};
			\draw[blue,very thick] (1.4,-2.3) -- (1.4,-2.8);
			\node at (1.05,-2.5) {2};
			\node at (1.75,-2.5) {6};
			\node[left] at (-2.2,-3.1) {$p_\lambda(f)$:};
			\node at (-1.75,-3.1) {0};
			\node at (-1.05,-3.1) {0};
			\node at (-.35,-3.1) {0};
			\node at (.35,-3.1) {1};
			\node at (1.05,-3.1) {1};
			\node at (1.75,-3.1) {2};
			\node at (6.5,0) {
				\renewcommand{\arraystretch}{1.4}
				\begin{tabular}{l|l l l l}
					$m$ & $B_m$ & $X_m$ & $\sigma_m$ & $B_m'$\\
					\hline
					1 & $\{1,4\}$ & $\{1,4\}$ & 4 & $\{1\}$\\
					2 & $\{1,3,5\}$ & $\{1,3\}$ & 3 & $\{1,5\}$\\
					3 & $\{1,5\}$ & $\{1\}$ & 1 & $\{5\}$\\
					4 & $\{5\}$ & $\varnothing$ & 5 & $\varnothing$\\
					5 & $\{6,2\}$ & $\{2\}$ & 2 & $\{6\}$\\
					6 & $\{6\}$ & $\varnothing$ & 6 & $\varnothing$
				\end{tabular}
			};
			\node at (3,-3.1) {$\Longrightarrow$};
			\node at (6.5,-3.1) {$\pmaj(f) = 0+0+0+1+1+2 = 4$};
		\end{tikzpicture}
		\caption{A step-by-step calculation of the $\pmaj$ statistic of an $f \in \PF_6$. On the left, the positions of the ascents of $\sigma_{\pmaj}(f)$ are highlighted with blue vertical bars.}\label{fig:cl_exa_pmaj}
	\end{figure}
\end{example}

\section{The map $\phi_n$}\label{sec:cl_themap}

In~\cite{LoehrRemmel_pmaj} Loehr and Remmel defined a remarkable bijection of $\PF_n$ into itself, sending the bi-statistic $(\dinv,\area)$ to $(\area,\pmaj)$.

The function $\phi_n: \PF_n \longrightarrow \PF_n$ that we define in this section turns out to be the same function, though at first sight it looks quite different. In particular, our definition is more direct, hence easier to compute and to work with. This new definition is the only original element of the present work.

\medskip

In order to define our function $\phi_n:\PF_n\to \PF_n$, we need to introduce the \emph{dinv reading order} $\lread$ on the labels of $f \in \PF_n$: given $\lambda,\mu\in [n]$,
\[
\lambda \lread \mu \qquad \Longleftrightarrow \qquad a_\lambda(f) < a_\mu(f) \quad\text{or}\quad a_\lambda(f) = a_\mu(f) \text{ with } f(\lambda) < f(\mu).
\]
We define the \emph{dinv reading word} $\wdread(f) = w_1w_2\cdots w_n$ where the labels $\{1,2,\dots,n\}$ are ordered increasingly with respect to $\ldread$.\smallskip\\
Pictorially, the dinv reading word $\wdread(f)$ is simply obtained by reading the labels of $f$ along the diagonals $y=x+a$, starting from the main diagonal $y=x$ and going up, reading each diagonal from left to right. In Figure~\ref{fig:cl_exa_phi} the dinv reading word of the parking function on the left is computed.

\medskip

Let $\wdread(f)= w_1w_2 \cdots w_n$ be the dinv reading word of $f$. The \emph{dinv contribution} $d_\lambda(f)$ of the label $\lambda = w_j$ is
\begin{equation}\label{eq:cl_def_di}
	d_\lambda(f) \defeq \#\big\{ \{w_i,w_j\} \in \Dinv(f) \ \big| \ i < j \big\}.
\end{equation}
In Figure~\ref{fig:cl_exa_phi} the dinv contributions of the labels of the parking function on the left are computed.

\medskip

Finally, we define the image $\phi_n(f)$ of a parking function $f\in \PF_n$ with dinv reading word $\wdread(f)= w_1w_2 \cdots w_n$ as:
\begin{equation}\label{eq:cl_phi_def}
	(\phi_n(f))(w_i) \defeq (i-1) - d_{w_i}(f) + 1 = i - d_{w_i}(f) \qquad \text{for all $i \in [n]$}.
\end{equation}

\begin{example}
	Consider the parking function $f \in \PF_6$ from Examples~\ref{exa:cl_area}~and~\ref{exa:cl_dinv}, shown in Figure~\ref{fig:cl_exa_area}. Recall that its diagonal inversion set is
	\[
	\Dinv(f) = \{\{1,3\}, \{1,5\}, \{3,5\}\}.
	\]
	The dinv reading word $\wdread(f)$ is computed in Figure~\ref{fig:cl_exa_phi} by scanning the diagonals, as shown on the left. In the same figure, we wrote down the dinv contributions associated to the labels and used them to find the values of the function $\phi_6(f)$. In Figure~\ref{fig:cl_exa_phi} the resulting parking function $\phi_6(f)$ is shown on the right.
	\begin{figure}[ht]
		\centering
		\begin{tikzpicture}
			\node at (-2.3,0) {$f \defeq $};			
			\node at (12.5,0) {$=\phi_6(f)$};
			\node[scale=.85] at (0,0) {\begin{tikzpicture}
					\parkfunc{6}{1}{6,6,4,4,4,1}{4,5,1,2,6,3}{.7}
					\draw[blue,dotted,->,very thick] (.5,.5) -- (1.6,1.6);
					\draw[blue,dotted,->,very thick] (1.6,1.6) -- (3.7,3.7);
					\draw[blue,dotted,->,very thick] (3.7,3.7) -- (4.55,4.55);
					\draw[blue,dotted,left hook->,very thick] (-.35,.35) -- (.2,.9);
					\draw[blue,dotted,->,very thick] (.2,.9) -- (1.6,2.3);
					\draw[blue,dotted,->,very thick] (1.6,2.3) -- (3.85,4.55);
					\draw[blue,dotted,left hook->,very thick] (-.35,1.05) -- (1.6,3);
			\end{tikzpicture}};
			\node at (4.9,1) {$\Dinv(f) = \big\{\{1,3\},\{1,5\},\{3,5\}\big\}$};
			\node at (4.9,-1) {
				\renewcommand{\arraystretch}{1.4}
				\begin{tabular}{c|cccccc}
					{\color{blue}$\wdread(f)$} & {\color{blue}4} & {\color{blue}1} & {\color{blue}3} & {\color{blue}5} & {\color{blue}2} & {\color{blue}6}\\
					$d_{w_i}$ & 0 & 0 & 1 & 2 & 0 & 0\\
					{\color{red}$(\phi_6(f))(w_i)$} & {\color{red}1} & {\color{red}2} & {\color{red}2} & {\color{red}2} & {\color{red}5} & {\color{red}6}
				\end{tabular}
			};
			\node[scale=.85] at (10,0) {\begin{tikzpicture}
					\parkfunc{6}{1}{6,5,5,5,2,1}{4,1,3,5,2,6}{.7}
			\end{tikzpicture}};
		\end{tikzpicture}
		\caption{An example of the construction of the parking function $\phi_6(f)$.}\label{fig:cl_exa_phi}
	\end{figure}
\end{example}

The goal of the rest of this article is to show that $\phi_n$ is a bijection sending the bistatistic $(\dinv,\area)$ to the bistatistic $(\area,\pmaj)$.

\smallskip

A priori, it is not even clear that $\phi_n(f)$ is indeed a parking function.

\begin{proposition}\label{prop:cl_phi_welldefined}
	The map $\phi_n:\PF_n\to \PF_n$ is well defined.
\end{proposition}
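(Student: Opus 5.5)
The plan is to check the two defining requirements of Definition~\ref{def:cl_parkfunc} directly for the function $g \defeq \phi_n(f)$. The first requirement is that $g$ takes values in $[n]$. The second is the counting condition $|g^{-1}([m])| \geq m$ for every $m\in[n]$. Both should follow from the single inequality $0\le d_{w_i}(f)\le i-1$, so no fine structure of $\Dinv(f)$ will be needed.

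First, I make sure $g$ is a well-defined function on $[n]$. The dinv reading word $\wdread(f)=w_1\cdots w_n$ lists every label in $[n]$ exactly once, because $\ldread$ is a total order on the labels. Indeed, two distinct labels with equal $a$ and equal $f$-value would lie in the same row of the same column, which is impossible. Hence \eqref{eq:cl_phi_def} assigns exactly one value to each $\lambda\in[n]$.

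Next, I bound the dinv contributions. By \eqref{eq:cl_def_di}, $d_{w_i}(f)$ counts certain pairs $\{w_j,w_i\}$ with $j<i$. Each such pair is determined by its index $j\in\{1,\dots,i-1\}$, so $0\le d_{w_i}(f)\le i-1$. Plugging this into \eqref{eq:cl_phi_def} gives
\[
1 \le g(w_i) = i - d_{w_i}(f) \le i \le n \qquad\text{for all } i\in[n],
\]
so $g$ maps $[n]$ into $[n]$.

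Finally, I verify the parking condition. Fix $m\in[n]$. For each $i\le m$ the bound above gives $g(w_i)\le i\le m$. Hence the $m$ distinct labels $w_1,\dots,w_m$ all lie in $g^{-1}([m])$, and so $|g^{-1}([m])|\ge m$. This shows $g\in\PF_n$.

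I do not expect a genuine obstacle here. The only point needing care is the observation that $d_{w_i}(f)$ counts only pairs with an \emph{earlier} letter of the reading word, which gives the upper bound $i-1$. The parking condition then follows because the reading word supplies, for each $m$, an explicit set of $m$ labels sent into $[m]$.
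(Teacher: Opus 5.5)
Your proof is correct and follows the paper's argument essentially step for step. Both use the bound $0\le d_{w_i}(f)\le i-1$ to get $1\le \phi_n(f)(w_i)\le i$, and then observe that $\{w_1,\dots,w_m\}\subseteq \phi_n(f)^{-1}([m])$. Your extra check that $\ldread$ is a total order, so that $\wdread(f)$ lists each label exactly once, is a detail the paper leaves implicit.
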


\begin{proof}
We need to show that $\phi_n(f)$ is a parking function.\\
		Since there are at most $i-1$ labels preceding $w_i$ in the total order $\ldread$, by definition
		\[
		0 \leq d_{w_i}(f) \leq i-1 \qquad \text{for all }i \in [n]
		\]
		and therefore by Equation~\eqref{eq:cl_phi_def} we have
		\[
		1 \leq (\phi_n(f))(w_i) \leq i \qquad \text{for all }i \in [n].
		\]
		This shows that $\phi_n(f)$ is a function from $[n]$ into itself. We now observe that for every $i\in [n]$
		\[
		\{w_1,\dots,w_i\} \subseteq \phi_n(f)^{-1}(\{1,\dots,i\}),
		\]
		therefore $|\phi_n(f)^{-1}(\{1,\dots,i\})| \geq i$. Hence $\phi_n(f)\in \PF_n$, as claimed.
\end{proof}

There is another property that is easy to deduce from our definition of $\phi_n$.
\begin{proposition}
For every $f \in \PF_n$ we have
\begin{equation}\label{eq:cl_phi_dinv-area}
	\dinv(f) = \area(\phi_n(f)).
\end{equation}
\end{proposition}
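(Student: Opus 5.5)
The plan is to compute both sides as sums over labels and compare. The key observation is that the area of any parking function is determined by the multiset of its values. Then $\area(\phi_n(f))$ can be read off directly from Equation~\eqref{eq:cl_phi_def}, with no need to understand the shape of the Dyck path of $\phi_n(f)$.

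First I will show that for every $g\in\PF_n$
\[
\area(g) = \binom{n+1}{2} - \sum_{\lambda\in[n]} g(\lambda).
\]
Let $\lambda_i$ be the label in row $i$ (rows numbered bottom to top as in Definition~\ref{def:cl_area}). Its north step lies on the line $x=g(\lambda_i)-1$ and goes from height $i-1$ to height $i$. The unit square of row $i$ meeting the main diagonal is in column $i$. Hence the squares of row $i$ strictly between the path and the diagonal are exactly those in columns $g(\lambda_i),\dots,i-1$, so $a_{\lambda_i}(g)=i-g(\lambda_i)$. The Dyck condition guarantees this quantity is nonnegative. Summing over $i\in[n]$ gives the formula.

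Next I will show that $\sum_{\lambda\in[n]} d_\lambda(f)=\dinv(f)$. Any pair $\{w_i,w_j\}\in\Dinv(f)$ with $i<j$ is counted in Equation~\eqref{eq:cl_def_di} exactly once, namely in $d_{w_j}(f)$, the contribution of its later element in the dinv reading word. So summing the dinv contributions counts every diagonal inversion exactly once.

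Finally I combine these. By Proposition~\ref{prop:cl_phi_welldefined}, $\phi_n(f)\in\PF_n$, so the area formula applies to $g=\phi_n(f)$. By Equation~\eqref{eq:cl_phi_def},
\[
\sum_{i=1}^n (\phi_n(f))(w_i)=\sum_{i=1}^n i-\sum_{i=1}^n d_{w_i}(f)=\binom{n+1}{2}-\dinv(f).
\]
Substituting into the area formula gives $\area(\phi_n(f))=\dinv(f)$.

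I expect no serious obstacle. The only point needing care is the row-by-row identity $a_{\lambda_i}(g)=i-g(\lambda_i)$, which requires getting the indexing of rows, columns and the diagonal squares consistent with the conventions of Remark~\ref{rem:cl_pf_Dyck}.
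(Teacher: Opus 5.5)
Your proposal is correct and follows essentially the same route as the paper. Your identity $\area(g)=\binom{n+1}{2}-\sum_{\lambda}g(\lambda)$ is just a rewriting of the paper's $\area(g)=\binom{n}{2}-\sum_{\lambda}(g(\lambda)-1)$, namely the squares above the diagonal minus the squares to the left of each north step. After that, both arguments plug in the defining formula $(\phi_n(f))(w_i)=i-d_{w_i}(f)$ and use $\sum_i d_{w_i}(f)=\#\Dinv(f)=\dinv(f)$; your row-by-row check $a_{\lambda_i}(g)=i-g(\lambda_i)$ and your remark that each diagonal inversion is counted once (in the contribution of its later element) spell out steps the paper states without detail.
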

\begin{proof}
Given a parking function $f \in \PF_n$, we observed in Remark~\ref{rem:cl_pf_Dyck} that $f(\lambda)$ is the number of the column of $\lambda\in [n]$, hence, considering the diagram of $f$, 
\begin{equation}\label{eq:cl_intepr_phif}
	f(\lambda) = \#\left\{\begin{array}{c}
		\text{unit squares to the left of}\\
		\text{the north step labelled }\lambda
	\end{array}\right\} - 1 \qquad \text{for all }\lambda \in [n].
\end{equation}
The number of all the unit squares above the main diagonal in the diagram is $\binom{n}{2}$, therefore by definition of the area statistic
\begin{align*}
	\area(\phi_n(f)) &= \binom{n}{2} - \sum_{i=1}^{n}\big((\phi_n(f))(w_i)-1\big) = \sum_{i=1}^{n}(i-1) - \sum_{i=1}^{n}((i-1) - d_{w_i}(f))\\
	&= \sum_{i=1}^{n}d_{w_i}(f) = \#\Dinv(f) = \dinv(f).
\end{align*}
This concludes the proof.
\end{proof}

It remains to show that $\phi_n$ is bijective, and that $\area(f)=\pmaj(\phi_n(f))$ for every $f\in \PF_n$. These properties are trickier to prove. We address both of them ``simultaneously'' in the next sections.

\section{Definition of the inverse map $\psi_n$}\label{sec:cl_theinverse}

In this section we construct a map $\psi_n: \PF_n \to \PF_n$, whose definition is iterative, hence much more involved than the one of $\phi_n$ (cf.\ Remark~\ref{rem:speculation}). In later sections we will prove that $\psi_n$ is the inverse of $\phi_n$, and as a corollary we will deduce that $\area(f) = \pmaj(\phi_n(f))$ for all $f \in \PF_n$. 

\medskip

The definition of $\psi_n$ is essentially the original one appearing in~\cite{LoehrRemmel_pmaj}. Since it is quite involved, we will illustrate it via an example.

\begin{example}\label{exa:cl_constr_psi}
	Consider the parking function $g \in \PF_8$ in Figure~\ref{fig:cl_exa_psi_f}.
	\begin{figure}[ht]
		\centering
		\begin{tikzpicture}
			\node at (-2.6,0) {$g \defeq $};
			\node[scale=.8] at (0,0) {
				\begin{tikzpicture}
					\parkfunc{8}{1}{8,8,7,5,5,5,3,1}{6,7,3,1,4,5,8,2}{.7}
					\node[left] at (-.2,-.5) {$\sigma_{\pmaj}(g)$:};
					\node at (0.35,-.5) {7};
					\node at (1.05,-.5) {6};
					\node at (1.75,-.5) {3};
					\node at (2.45,-.5) {1};
					\node at (3.15,-.5) {5};
					\node at (3.85,-.5) {4};
					\node at (4.55,-.5) {8};
					\node at (5.25,-.5) {2};
					\draw[blue,very thick] (2.8,-.2) -- (2.8,-.8);
					\draw[blue,very thick] (4.2,-.2) -- (4.2,-.8);
					\node at (6.3, 6) {Co-area};
					\node at (6.3, .35) {0};
					\node at (6.3, 1.05) {0};
					\node at (6.3, 1.75) {1};
					\node at (6.3, 2.45) {3};
					\node at (6.3, 3.15) {3};
					\node at (6.3, 3.85) {3};
					\node at (6.3, 4.55) {5};
					\node at (6.3, 5.25) {7};
				\end{tikzpicture}
			};
			\node[right] at (4,0) {
				\renewcommand{\arraystretch}{1.4}
				\begin{tabular}{l|c c c c c c c c}
					label $\lambda$ & 7 & 6 & 3 & 1 & 5 & 4 & 8 & 2 \\ \hline
					pmaj $p_\lambda(g)$ & 0 & 0 & 0 & 0 & 1 & 1 & 2 & 2 \\
					coarea $c_\lambda(g)$ & 0 & 0 & 1 & 3 & 3 & 3 & 5 & 7
				\end{tabular}
			};
		\end{tikzpicture}
		\caption{The parking function $g \in \PFp_8$ considered in the example and its $(\area,\pmaj)$ statistics. On the right a table with the information needed to construct $\psi_8(g)$.}\label{fig:cl_exa_psi_f}
	\end{figure}\\
	First of all, we compute the word $\sigma_{\pmaj}(g)$, and for each label $\lambda$ we record its $\pmaj$ contribution \mbox{$p_\lambda:=p_\lambda(g)$}, together with its \emph{coarea contribution} $c_\lambda=c_\lambda(g):=g(\lambda)-1$, i.e.\ the number of squares to the left of $\lambda$ in the diagram of $g$: cf.\ the table on the right in Figure~\ref{fig:cl_exa_psi_f}.

	The idea of the construction is to build up the labelled Dyck path $\psi_8(g)$ iteratively, by inserting the labels of the permutation $\sigma_{\pmaj}(g) = \sigma_1\sigma_2 \cdots \sigma_8$ in this order.\\
	We start with the empty $0\times 0$ labelled Dyck path. For every $m = 1,2,\dots,8$, we will choose the end point of a step in the $(m-1)\times (m-1)$ labelled Dyck path obtained at the previous iterative step, where we will insert a north step labelled $\sigma_m$ immediately followed by an east step, in such a way that
	\begin{itemize}
		\item we obtain a $m\times m$ labelled Dyck path,
		\item the row-area contribution of $\sigma_m$ in $\psi_8(g)$ is equal to $p_{\sigma_m}(g)$, and
		\item the label $\sigma_m$ ``avoids'' to create exactly $c_{\sigma_m}(g)$ diagonal inversions; in other words, in the new $m\times m$ labelled Dyck path the label $\sigma_m$ does not create a diagonal inversion with exactly $c_{\sigma_m}(g)$ many labels.
	\end{itemize}
	\medskip 
	In our example, this iterative construction consists of $8$ iterative steps, shown in Figure~\ref{fig:cl_exa_psi_constr}:
	\begin{itemize}
		\item \underline{Step 1}: just construct the $1\times1$ labelled Dyck path where the only north step is labelled $\sigma_1 = 7$.
		\item \underline{Step 2}: since label $\sigma_2 = 6$ has $\pmaj$ contribution $p_6 = 0$, the corresponding north step in the labelled path will have $0$ area contribution. Hence, we consider the main diagonal, highlighted with a dotted line in the picture. Intersecting this diagonal with the labelled path we get $2$ intersection points shown in blue. Both are possible spots in which to add the label $\sigma_2 = 6$, but they would ``avoid'' a different number of diagonal inversions (indicated with blue labels in the figure).\\
		Since $c_6 = 0$, at the blue point labelled $0$ we insert a north step labelled $\sigma_2 = 6$ followed by an east step. In this way we obtain the second labelled Dyck path in Figure~\ref{fig:cl_exa_psi_constr}. Observe that the labels $6$ and $7$ do create one dinv, so we ``avoided'' $c_6 = 0$ diagonal inversions.
		\item \underline{Steps $3$ and $4$}: since $p_3=p_4 = 0$, these two steps are analogous to Step 2: the insertion point for each new label is indicated by a bar on the correct number of ``avoided'' diagonal inversions ($c_3=1$ and $c_1=3$).
		\item \underline{Step 5}: now $\sigma_5 = 5$ has $\pmaj$ contribution $p_5 = 1$. So in $\psi_8(g)$ it should have $1$ row-area contribution, i.e.\ it should lie exactly one diagonal above the main one (cf.\ the fourth labelled path in Figure~\ref{fig:cl_exa_psi_constr}). Intersecting this diagonal with the path, we get 4 intersection points, shown with a dot ``\textbullet'' or with a ``$\times$''.\\
		Since at every step we want a labelled Dyck path, not every intersection point is suitable for our insertion. For example label $5$ cannot lie directly above label $6$ or label $7$. We indicate with an ``$\times$'' these ``bad'' intersection points.\\
		We observe that inserting label $5$ in the remaining positions would avoid $3$ and $4$ diagonal inversions.\\
		In this case $c_5 = 3$, so we do our insertion at the blue point labelled $3$.
		\item \underline{Steps $6$, $7$ and $8$}: we follow the previous rules, keeping in mind to use the diagonal \mbox{$y=x+p_{\sigma_m}$} if $\sigma_m$ has $\pmaj$ contribution $p_{\sigma_m}$, and to avoid the ``bad'' intersection points.
	\end{itemize}
	\begin{figure}[ht]
		\centering
		\begin{tikzpicture}
			\node[scale=.9] at (0,0) {\begin{tikzpicture}
					\draw[green!60!black, dotted, very thick] (-.2,-.2) -- (.9,.9);
					\parkfunc{1}{1}{1}{7}{.7};
					\node[blue,label={[label distance=-.2cm,text=blue]135:$\overline{0}$}] at (0,0) {\textbullet};
					\node[blue,label={[label distance=-.2cm,text=blue]135:1}] at (.7,.7) {\textbullet};
			\end{tikzpicture}};
			
			\node[scale=.9] at (1.8,0) {\begin{tikzpicture}
					\draw[green!60!black, dotted, very thick] (-.2,-.2) -- (1.6,1.6);
					\parkfunc{2}{1}{2,1}{6,7}{.7};
					\node[blue,label={[label distance=-.2cm,text=blue]135:0}] at (0,0) {\textbullet};
					\node[blue,label={[label distance=-.2cm,text=blue]135:$\overline{1}$}] at (.7,.7) {\textbullet};
					\node[blue,label={[label distance=-.2cm,text=blue]135:2}] at (1.4,1.4) {\textbullet};
			\end{tikzpicture}};
			
			\node[scale=.9] at (4.4,0) {\begin{tikzpicture}
					\draw[green!60!black, dotted, very thick] (-.2,-.2) -- (2.3,2.3);
					\parkfunc{3}{1}{3,2,1}{6,3,7}{.7};
					\node[blue,label={[label distance=-.2cm,text=blue]135:0}] at (0,0) {\textbullet};
					\node[blue,label={[label distance=-.2cm,text=blue]135:1}] at (.7,.7) {\textbullet};
					\node[blue,label={[label distance=-.2cm,text=blue]135:2}] at (1.4,1.4) {\textbullet};
					\node[blue,label={[label distance=-.2cm,text=blue]135:$\overline{3}$}] at (2.1,2.1) {\textbullet};
			\end{tikzpicture}};
			
			\node[scale=.9] at (7.7,0) {\begin{tikzpicture}
					\draw[green!60!black, dotted, very thick] (-.2,.5) -- (2.3,3);
					\parkfunc{4}{1}{4,3,2,1}{6,3,7,1}{.7};
					\node[blue,label={[label distance=-.2cm,text=blue]135: }] at (0,.7) {$\times$};
					\node[blue,label={[label distance=-.2cm,text=blue]135:$\overline{3}$}] at (.7,1.4) {\textbullet};
					\node[blue,label={[label distance=-.2cm,text=blue]135: }] at (1.4,2.1) {$\times$};
					\node[blue,label={[label distance=-.2cm,text=blue]135:4}] at (2.1,2.8) {\textbullet};
			\end{tikzpicture}};
			
			\node[scale=.9] at (11.7,0) {\begin{tikzpicture}
					\draw[green!60!black, dotted, very thick] (-.2,.5) -- (3,3.7);
					\parkfunc{5}{1}{5,4,4,2,1}{6,3,5,7,1}{.7};
					\node[blue,label={[label distance=-.2cm,text=blue]135: }] at (0,.7) {$\times$};
					\node[blue,label={[label distance=-.2cm,text=blue]135:$\overline{3}$}] at (.7,1.4) {\textbullet};
					\node[blue,label={[label distance=-.2cm,text=blue]135:4}] at (1.4,2.1) {\textbullet};
					\node[blue,label={[label distance=-.2cm,text=blue]135: }] at (2.1,2.8) {$\times$};
					\node[blue,label={[label distance=-.2cm,text=blue]135:5}] at (2.8,3.5) {\textbullet};
			\end{tikzpicture}};
			
			\node[scale=.9] at (3,-4.7) {\begin{tikzpicture}
					\draw[green!60!black, dotted, very thick] (-.2,1.2) -- (3,4.4);
					\parkfunc{6}{1}{6,5,5,4,2,1}{6,3,4,5,7,1}{.7};
					\node[blue,label={[label distance=-.2cm,text=blue]135:$\overline{5}$}] at (.7,2.1) {\textbullet};
					\node[blue,label={[label distance=-.2cm,text=blue]135:6}] at (1.4,2.8) {\textbullet};
			\end{tikzpicture}};
			
			\node[scale=.9] at (9,-4.7) {\begin{tikzpicture}
					\draw[green!60!black, dotted, very thick] (-.2,1.2) -- (3.7,5.1);
					\parkfunc{7}{1}{7,6,6,6,4,2,1}{6,3,4,8,5,7,1}{.7};
					\node[blue,label={[label distance=-.2cm,text=blue]135: }] at (.7,2.1) {$\times$};
					\node[blue,label={[label distance=-.2cm,text=blue]135:$\overline{7}$}] at (1.4,2.8) {\textbullet};
					\node[blue,label={[label distance=-.2cm,text=blue]135: }] at (2.1,3.5) {$\times$};
			\end{tikzpicture}};
			
			\node at (3,-10.4) {$\psi_8(g) = $};
			\node[scale=.9] at (6.3,-10.4) {\begin{tikzpicture}
					\parkfunc{8}{1}{8,7,7,7,6,4,2,1}{6,3,4,8,2,5,7,1}{.7};
			\end{tikzpicture}};
		\end{tikzpicture}
		\caption{The sequence of labelled Dyck paths to construct $\psi_8(g)$ for the $g$ in Figure~\ref{fig:cl_exa_psi_f}.}\label{fig:cl_exa_psi_constr}
	\end{figure}
\end{example}

\newpage

\section{$\psi_n$ is well defined}

First of all we need to show that $\psi_n:\PF_n\to \PF_n$ is a well-defined function.

\smallskip

Consider $g \in \PF_n$ a parking function and compute $\sigma_{\pmaj}(g) = \sigma_1\sigma_2 \cdots\sigma_n$. Recall that for each label $\lambda \in [n]$, $c_\lambda=c_\lambda(g) \defeq g(\lambda) - 1$ denotes the co-area contribution of the row with north step labelled $\lambda$ in $g$, and $p_\lambda = p_\lambda(g)$ denotes the pmaj contribution of the label $\lambda$ in $g$.
\medskip\\
We want to perform our iterative procedure as in Example~\ref{exa:cl_constr_psi}, hence, starting with the empty labelled Dyck path, we will perform $n$ insertion steps. At Step $m$, we will look at the intersection of the diagonal $y=x+p_{\sigma_m}(g)$ with the $(m-1)\times (m-1)$ labelled Dyck path obtained at Step $m-1$, and insert a north step labelled $\sigma_m$ immediately followed by an east step in a \emph{suitable point} of that diagonal, i.e.\ an end point of a step of the $(m-1)\times (m-1)$ labelled Dyck path such that the insertion performed in that point gives a $m\times m$ labelled Dyck path. Notice that in this way its row-area contribution equals $p_{\sigma_m}(g)$. Moreover, we want to insert $\sigma_m$ in such a way that it avoids exactly $c_{\sigma_m}(g)$ diagonal inversions.\medskip

In order to show that $\psi_n$ is well defined we need the following lemma.

\begin{lemma}\label{lem:well_def}
For every $m=1,2,\dots,n$, at Step $m$ there exists a unique suitable insertion point on which the insertion of label $\sigma_m$ avoids exactly $c_{\sigma_m}$ diagonal inversions.
\end{lemma}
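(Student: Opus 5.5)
The plan is to fix Step $m$, write $\lambda \defeq \sigma_m$ and $k \defeq p_\lambda(g)$, and analyse the finitely many suitable points on the diagonal $y=x+k$ of the $(m-1)\times(m-1)$ path $P_{m-1}$ built so far. Uniqueness and existence will both follow from one monotonicity statement: as we sweep the suitable points from left to right, the number of avoided diagonal inversions increases by exactly one at each step. Together with a check that $c_\lambda$ lies between the values at the two extreme suitable points, this gives the lemma.

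\emph{Which labels interact with $\lambda$.} By Definition~\ref{def:cl_pmaj}, $p_{\sigma_j}$ counts the ascents to the left of $\sigma_j$, so it is weakly increasing in $j$. Hence every label already in $P_{m-1}$ has row-area at most $k$. By Definition~\ref{def:cl_dinv}, $\lambda$ can only form a diagonal inversion with labels $\mu$ of row-area $k$ or $k-1$. I would first check, directly from (A) and (B), a ``switch'' property for such a $\mu$:
\begin{itemize}
\item if $a_\mu=k$, then $\{\lambda,\mu\}$ is a diagonal inversion iff either $\lambda$ is to the left of $\mu$ and $\lambda<\mu$, or $\lambda$ is to the right of $\mu$ and $\mu<\lambda$;
\item if $a_\mu=k-1$, the roles of $<$ and $>$ are exchanged.
\end{itemize}
Let $N$ be the number of such $\mu$. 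The number of avoided inversions at a point is then $N$ minus the number created there.

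\emph{Sweeping along the diagonal.} Between two consecutive intersection points of $y=x+k$ with $P_{m-1}$, the sweep passes exactly one such label $\mu$ (the north step of area $k$ or $k-1$ met between them). By the switch property, passing $\mu$ changes the avoided count by $+1$ or $-1$, according to the relative order of $\lambda$ and $\mu$. The key step is to show that every $-1$ move leads into a \emph{bad} point, i.e.\ a point directly above a label larger than $\lambda$ in the same column. Consequently, restricted to the suitable points, the avoided count is strictly increasing with increments of exactly $1$. I expect this to use the structure of $\sigma_{\pmaj}(g)$:
\begin{itemize}
\item within a block of equal $p$ the $\sigma_j$ are decreasing;
\item the first entry of block $k$ exceeds the last entry of block $k-1$.
\end{itemize}
This controls which labels directly below a diagonal-$k$ point are larger than $\lambda$. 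The consequence is that the avoided counts at suitable points form a set of consecutive integers $\{\alpha,\alpha+1,\dots,\beta\}$, so at most one suitable point realises $c_\lambda$.

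\emph{Existence, the main obstacle.} It remains to prove $\alpha\le c_\lambda\le\beta$. Here one must relate the coarea $c_\lambda = g(\lambda)-1$ to the pmaj algorithm, namely to the index $m$ and to the sets $B_m, B_m'$ at the moment $\lambda$ is chosen. I would try an induction on $m$ carrying an explicit invariant. It should express $\alpha$ and $\beta$ through the number of earlier labels in the current and previous runs, and those of them larger or smaller than $\lambda$. It should also express $g(\lambda)$ through the same quantities, using that $\lambda$ is chosen as $\max X_m$ (or $\max B_m$ at the start of a new run). Verifying that these two descriptions interlock is the step I expect to be genuinely delicate. I would first test the invariant on Example~\ref{exa:cl_constr_psi}, where the admissible ranges are $\{0\},\{0,1\},\{0,\dots,3\},\{3,4\},\{3,4,5\},\{5,6\},\{7\}$, before attempting the general proof.
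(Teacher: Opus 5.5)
\textbf{Verdict: genuine gap.} Your uniqueness argument uses a false switch property, and existence, the heart of the lemma, is left as an invariant you never formulate.

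\textbf{The switch property fails on diagonal $k-1$.} For a label $\mu$ on diagonal $k-1$, only condition (B) can hold, and (B) needs the \emph{smaller} label on the lower diagonal. Hence $\{\lambda,\mu\}$ is a diagonal inversion if and only if $\mu<\lambda$ and $\lambda$ lies strictly to the left of $\mu$. A label $\mu>\lambda$ on diagonal $k-1$ never forms a diagonal inversion with $\lambda$, wherever $\lambda$ is placed. Neither does a smaller $\mu$ sitting directly below $\lambda$ in its column.

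Every label already on diagonal $k$ lies in $R_k$ before $\lambda$, so it is larger than $\lambda$. Combining this with the corrected rule, the sweep has \emph{no} $-1$ moves at all:
\begin{itemize}
\item crossing a label of diagonal $k$, or a label of diagonal $k-1$ smaller than $\lambda$, adds $1$ and lands on a suitable point;
\item crossing a label of diagonal $k-1$ larger than $\lambda$ changes nothing and lands on a bad point.
\end{itemize}
That is why the counts at suitable points increase by exactly one. Your inference as written does not yield this even on its own terms: a $-1$ move into a bad point followed by a $+1$ move would repeat a value at consecutive suitable points.

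\textbf{The missing idea for existence.} No induction is needed once you pin down the extreme values $\alpha$ and $\beta$.
\begin{itemize}
\item By the corrected sweep, the suitable points correspond bijectively to the labels of $R_k$ larger than $\lambda$ and the labels of $R_{k-1}$ smaller than $\lambda$. The letter $0$ of $R_{-1}$ accounts for the origin when $k=0$. So there are exactly $u_m$ suitable points.
\item At the rightmost suitable point, $\lambda$ creates no diagonal inversion at all. This is because all labels of diagonal $k$ and all smaller labels of diagonal $k-1$ precede it along the path.
\item Hence the avoided counts are exactly $m-u_m,\dots,m-1$, i.e.\ $\alpha=m-u_m$ and $\beta=m-1$. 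This is Lemma~\ref{lem:cl_avoid_dinv}. The ranges you read off the example all end at $m-1$, though your list skips $\{0,1,2\}$ at $m=3$.
\end{itemize}

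Existence then reduces to $m-u_m+1\le g(\sigma_m)\le m$ (Lemma~\ref{lem:cl_co_area}), which follows directly from the pmaj algorithm.
\begin{itemize}
\item The upper bound is $\sigma_m\in B_m\subseteq g^{-1}([m])$.
\item For the lower bound, runs are decreasing, so $m-u_m=t$. Here $\sigma_t$ is the first letter of $R_{k-1}$ smaller than $\sigma_m$, or $t$ is the first position of $R_k$ if there is none (the case $k=0$ is trivial).
\item If $g(\sigma_m)\le t$, then $\sigma_m\in B_t$. A short check on whether $t$ starts a run shows that step $t$ would then have selected a letter $\ge\sigma_m$, or could not have produced the ascent at $t-1$. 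Either way this is a contradiction.
\end{itemize}
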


The rest of this section is dedicated to the proof of this fundamental result. We need a few definitions.\\
Given $g \in \PF_n$, consider the permutation $\sigma_{\pmaj}(g) = \sigma_1\sigma_2\cdots\sigma_n$, and insert a bar at the ascent positions of this word. The subwords separated by the bars are called \emph{runs}, and we denote them $R_0,R_1,R_2,\dots$ in increasing order from left to right. 
\begin{remark} \label{rem:Rpsigmam}
Observe that by construction $\lambda\in R_{p_\lambda(g)}$ for every $\lambda\in [n]$.	
\end{remark}

\begin{example}\label{exa:cl_desc_runs}
	Consider $\sigma_{\pmaj}(g)$ from Example~\ref{exa:cl_constr_psi}. We draw vertical bars to show the ascents of the word:
	\begin{center}
		\begin{tikzpicture}
			\node at (-1,0) {$\sigma_{\pmaj}(g)=$};
			\node at (0,0) {7};
			\node at (0.5,0) {6};
			\node at (1,0) {3};
			\node at (1.5,0) {1};
			\node at (2,0) {5};
			\node at (2.5,0) {4};
			\node at (3,0) {8};
			\node at (3.5,0) {2.};
			
			\draw[blue, very thick] (1.75,-.35) -- (1.75, .35);
			\draw[blue, very thick] (2.75,-.35) -- (2.75, .35);
		\end{tikzpicture}
	\end{center}
	So $R_0=7\, 6\, 3\, 1$, $R_1=5\, 4$ and $R_2=8\, 2$.
\end{example}

Now add the letter $\sigma_0 = 0$ at the beginning of the word $\sigma_{\pmaj}(g) = \sigma_1\sigma_2\dots\sigma_n$, so that we now have a new ascent and a new run: $R_{-1} = 0$.  For every $m \in [n]$, if $\sigma_m$ occurs in $R_h$, then we set
\begin{equation}\label{eq:cl_def_um}
	u_m \defeq \#\{\text{labels in $R_h$ greater than $\sigma_m$}\} + \#\{\text{labels in $R_{h-1}$ smaller than $\sigma_m$}\},
\end{equation}
and define the word $u(g):=u_1 u_2\cdots u_n$.

\begin{example}
	Consider $\sigma_{\pmaj}(g)=7\, 6\, 3\, 1\, 5\, 4\, 8\, 2$ from Example~\ref{exa:cl_constr_psi}. Then $u(g)=1\, 2\, 3\, 4\, 2\, 3\, 2\, 1.$
\end{example}
From every $m\in [n]$, the $u_m$ provides a range in which $g(\sigma_m)$ can occur. 

\begin{lemma}\label{lem:cl_co_area}
	Given $g \in \PF_n$, consider the permutation $\sigma_{\pmaj}(g) = \sigma_1\sigma_2\cdots\sigma_n$. For all $m \in [n]$ we have
	\begin{equation}\label{eq:cl_dis_tech}
		m - u_m \leq c_{\sigma_m}(g) \leq m - 1\, ,
	\end{equation}
	equivalently
	\begin{equation*}
		m - u_m + 1 \leq g(\sigma_m) \leq m.
	\end{equation*}
\end{lemma}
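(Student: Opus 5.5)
The upper bound is immediate from the construction of $\sigma_{\pmaj}(g)$ in Definition~\ref{def:cl_pmaj}, so the real content is the lower bound. Write $k\defeq g(\sigma_m)$. Since $\sigma_m$ is chosen from $B_m$, and every element of $B_m$ lies in $g^{-1}(\{1,\dots,m\})$, we get $k\le m$, i.e.\ $c_{\sigma_m}(g)\le m-1$. For the lower bound, observe that $\sigma_m\in B_j$ for every $j=k,\dots,m$, while $\sigma_j\neq\sigma_m$ for $j<m$. So the $m-k$ labels $\sigma_k,\dots,\sigma_{m-1}$ are exactly the labels chosen \emph{while $\sigma_m$ was available}. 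The inequality $k\ge m-u_m+1$ is then equivalent to $m-k\le u_m-1$. I will prove it by injecting $\{\sigma_k,\dots,\sigma_{m-1}\}$ into the set counted by $u_m$ and exhibiting one element of that set that is missed.

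The key tool is a ``trapping'' observation. Suppose $\sigma_m$ is available at step $j$ and $\sigma_m<\sigma_{j-1}$. Then $\sigma_m\in X_j$, so $\sigma_j=\max X_j\ge\sigma_m$, and if $j<m$ this is strict. Iterating, all subsequent choices stay strictly above $\sigma_m$, hence are descents, until $\sigma_m$ itself is chosen. In particular no ascent can occur before $\sigma_m$ is picked, so $\sigma_m$ lies in the same run as $\sigma_{j-1}$ (or $\sigma_j$). Let $\sigma_m\in R_h$ and let $\sigma_t$ be the last letter of $R_{h-1}$, with the convention that $t=0$ gives the added letter $\sigma_0=0$ when $h=0$. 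The first letter $\sigma_{t+1}$ of $R_h$ arises from an empty $X_{t+1}$, so $\sigma_{t+1}=\max B_{t+1}$. From these two facts I plan to deduce the following.
\begin{enumerate}[label=(\roman*)]
\item No $\sigma_j$ with $k\le j<m$ lies in a run $R_{h'}$ with $h'\le h-2$. Otherwise $\sigma_m$ would be available at the first step of $R_{h-1}$, be trapped below its maximum choice, and be picked inside $R_{h-1}$.
\item If $\sigma_j\in R_{h-1}$ with $k\le j<m$, then $\sigma_j<\sigma_m$. Otherwise trapping would keep $\sigma_m$ in $R_{h-1}$.
\item If $\sigma_j\in R_h$ with $j<m$, then $\sigma_j>\sigma_m$, because runs are decreasing.
\end{enumerate}
Together these give an injection of $\{\sigma_k,\dots,\sigma_{m-1}\}$ into the labels counted by $u_m$, which yields $m-k\le u_m$.

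To gain the extra $1$, I will show that some counted label is chosen \emph{before} step $k$, splitting into cases according to where $k$ falls.
\begin{enumerate}[label=(\alph*)]
\item If $h=0$, the letter $\sigma_0=0$ is counted by $u_m$ but is never a chosen label.
\item If $k$ lies in $R_{h-1}$, then $k$ is not the first step of that run, since otherwise $\sigma_k=\max B_k>\sigma_m$, contradicting (ii). Moreover $\sigma_{k-1}<\sigma_m$ by trapping. So $\sigma_{k-1}\in R_{h-1}$ is counted but was chosen before step $k$.
\item If $k=t+1$, then $\sigma_t<\sigma_m$ because $X_{t+1}=\varnothing$, and $\sigma_t$ is again counted but earlier.
\item If $k>t+1$, then $\sigma_{k-1}\in R_h$. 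If $\sigma_{k-1}<\sigma_m$, then $\sigma_m$ could never enter any $X_j$ before the next ascent, and so would be picked in a later run, a contradiction. Hence $\sigma_{k-1}>\sigma_m$ is counted and lies outside the range $[k,m-1]$.
\end{enumerate}

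I expect the main obstacle to be this last bookkeeping: making the trapping argument airtight at run boundaries, and checking the conventions involving $\sigma_0=0$ (the definition of $\pmaj$ uses $\sigma_0=n+1$, while $u_m$ uses $\sigma_0=0$), so that exactly one uncounted element is always produced.
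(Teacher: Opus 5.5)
Your proof is correct and follows the paper's idea. The upper bound comes from $\sigma_m\in B_m\subseteq g^{-1}(\{1,\dots,m\})$, and the lower bound from the fact that the $\pmaj$ algorithm cannot keep an available $\sigma_m$ waiting too long; the paper settles this in one sentence by contradiction (``the algorithm would have inserted $\sigma_m$ earlier''), and your trapping observation together with the inclusion of $\{\sigma_k,\dots,\sigma_{m-1}\}$ in the set counted by $u_m$, plus one extra counted label, makes that sentence rigorous. As a small simplification, in cases (b)--(d) the extra label is always $\sigma_{k-1}$, and in (d) the inequality $\sigma_{k-1}>\sigma_m$ follows at once from (iii), so these three cases can be merged.
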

\begin{proof}
	We prove the two inequalities separately.\\
	To show that $m - u_m + 1 \leq g(\sigma_m)$ there are two sub-cases.
	\begin{itemize}
		\item if $\sigma_m$ occurs in $R_0$, then $u_m = m$, and the inequality becomes $1 \leq g(\sigma_m)$ which is always true;
		\item if $\sigma_m$ occurs in $R_h$ with $h > 0$, then suppose that $g(\sigma_m) \leq m - u_m$: in this case the algorithm in Definition~\ref{def:cl_pmaj} would have inserted the label $\sigma_m$ earlier in the construction of $\sigma_{\pmaj}(g)$, which is absurd.
	\end{itemize}
	To prove $g(\sigma_m) \leq m$, we look at the algorithm in Definition~\ref{def:cl_pmaj}: the inequality must hold since $\sigma_m \in B_m \subseteq g^{-1}(\{1,2,\dots,m\})$.\\
	This concludes the proof.
\end{proof}

We can now state a more precise lemma.
\begin{lemma}\label{lem:cl_avoid_dinv}
	Let $g \in \PF_n$ and consider the iterative procedure to construct $\psi_n(g)$.
	For every $m \in [n]$ there are exactly $u_m$ suitable insertion points on the labelled Dyck path of size $(m-1)\times(m-1)$, and the insertion procedure of $\sigma_m$ would avoid respectively $m - u_m$, $m - u_{m} + 1$, $\dots$, $m-1$ diagonal inversions, reading these points from left to right.
\end{lemma}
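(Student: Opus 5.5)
We argue by induction on $m$, proving the statement of Lemma~\ref{lem:cl_avoid_dinv} together with a structural description of the intermediate labelled Dyck path $P_{m-1}$ built at Step $m-1$. At Step $m$ the label $\sigma_m$ lies in $R_h$ with $h=p_{\sigma_m}(g)$ (Remark~\ref{rem:Rpsigmam}). We insert $\sigma_m$ on the diagonal $y=x+h$, and we want to understand which labels already in $P_{m-1}$ lie on the diagonals $y=x+h$ and $y=x+h-1$. These are the only diagonals relevant for dinv with a new label of row-area $h$. By construction they are exactly the labels among $\sigma_1,\dots,\sigma_{m-1}$ belonging to $R_h$ and to $R_{h-1}$. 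Since runs are contiguous blocks, all of $R_{h-1}$ is already inserted, and the elements of $R_h$ preceding $\sigma_m$ are inserted. Because $R_h$ is decreasing, all of these are greater than $\sigma_m$.

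Next we count suitable insertion points. The intersection points of the diagonal $y=x+h$ with $P_{m-1}$ are in bijection with the lattice points of the path on that diagonal. Such a point is unsuitable exactly when it is the top of a north step whose label $\lambda$ sits on diagonal $h-1$ and satisfies $\lambda>\sigma_m$: inserting there would put $\sigma_m$ directly above a larger label in the same column. For $h=0$ there is no such constraint, all $m$ points on the main diagonal are suitable, and $u_m=m$ in that case since $R_{-1}=0$ is added. The induction hypothesis should therefore include the claim that on diagonal $h$ the lattice points number $1+\#(\text{labels on diagonal } h-1)$ plus something accounting for diagonal $h$ labels. This makes the count of suitable points equal to
\[
1+\#\{\lambda\in R_h:\lambda>\sigma_m,\ \text{inserted}\}+\#\{\lambda\in R_{h-1}:\lambda<\sigma_m\}-1+\cdots,
\]
and the aim is to match this exactly with $u_m$ from \eqref{eq:cl_def_um}. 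The cleanest invariant to maintain is that, walking along $P_{m-1}$, each label on diagonal $h-1$ is followed (after its north step) by a point on diagonal $h$. This follows from the Dyck path structure: each label on diagonal $k$ has its north step ending on diagonal $k+1$, and each label on diagonal $k+1$ starts at a point of diagonal $k$. So the points on diagonal $h$ are counted by the labels on diagonal $h-1$ plus one for the final point, unless the relevant diagonal is empty. Removing the bad points leaves $1+\#\{\lambda\in R_{h-1}:\lambda<\sigma_m\}$ points for $h\ge1$. I need to recheck how the labels of $R_h$ greater than $\sigma_m$ enter, since they create extra points on diagonal $h$ via their east-step continuations. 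This bookkeeping, reconciling the point count with $u_m$, is the step I expect to be the main obstacle. I would resolve it by explicitly tracking, for each inserted label, the points its insertion creates on the two diagonals.

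Finally we count avoided dinv. For a suitable point $P$, the inserted $\sigma_m$ forms a dinv with a label $\lambda$ on diagonal $h$ if $\lambda$ lies to the left of $P$ (for $\lambda>\sigma_m$, type (A) via column order). It forms a dinv with a label $\lambda$ on diagonal $h-1$ if $\lambda<\sigma_m$ and $\lambda$ lies to the right (type (B)), after comparing columns via \eqref{eq:cl_intepr_phif}. Since all diagonal-$h$ labels exceed $\sigma_m$, moving $P$ one suitable point to the right increases the created dinv by exactly one: we pass either one diagonal-$h$ label (gaining one) or one smaller diagonal-$(h-1)$ label (losing a type (B) avoidance). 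Passing bad points or larger diagonal-$(h-1)$ labels changes nothing. Hence the number of avoided inversions, $(m-1)-\#\text{created}$, decreases by one from right to left. At the rightmost point all $m-1$ other labels are avoided except none, because diagonal-$h$ labels on the left are counted once we check the convention for ``avoid''. The values are therefore the consecutive integers ending at $m-1$, namely $m-u_m,\dots,m-1$.
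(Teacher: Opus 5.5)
Your proposal has genuine gaps in both halves.

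\textbf{Counting suitable points.} You never finish the bookkeeping, and the intermediate formula you write is wrong. For $h\ge 1$ there is no ``extra final point'' on $y=x+h$, since the endpoint of the path lies on the main diagonal. The formula $1+\#\{\lambda\in R_{h-1}:\lambda<\sigma_m\}$ already fails at Step~5 of Example~\ref{exa:cl_constr_psi} ($\sigma_5=5$, $h=1$), where it gives $3$ instead of $u_5=2$.

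The missing idea, which is exactly the paper's key observation, is a matching. A lattice point of the path on $y=x+h$ is one of three things:
\begin{itemize}
\item the top of a north step starting on $y=x+h-1$, which is suitable iff that label is smaller than $\sigma_m$;
\item the end of an east step coming down from $y=x+h+1$, which is always suitable, since the new label is then alone in its column;
\item for $h=0$, the origin.
\end{itemize}
Each label on diagonal $h$ (a north step from $y=x+h$ to $y=x+h+1$) is paired bijectively with the first later return of the path to $y=x+h$. That return is the end of such an east step, and the path stays strictly above $y=x+h$ in between. Hence the suitable points correspond bijectively to the inserted labels of $R_h$ (all $>\sigma_m$), the labels of $R_{h-1}$ smaller than $\sigma_m$, and the origin when $h=0$ (the $0$ of $R_{-1}$). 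This gives $u_m$ directly; no induction is needed.

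\textbf{Counting avoided inversions.} You have type (A) backwards. Every label $\lambda$ on diagonal $h$ exceeds $\sigma_m$, so (A) for $\{\sigma_m,\lambda\}$ requires $f(\sigma_m)<f(\lambda)$, i.e.\ $\lambda$ strictly to the \emph{right} of the insertion point. With your rule, passing a diagonal-$h$ label and passing a smaller diagonal-$(h-1)$ label would change the count in opposite directions, and monotonicity would fail.

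Your text hides this in two ways. You say that passing a smaller diagonal-$(h-1)$ label adds a dinv, which contradicts your own (correct) type (B) rule. And your conclusion that the avoided number ``decreases from right to left'' contradicts your premise that the created number increases to the right. Two further claims are unproved: that exactly one relevant label lies between consecutive suitable points, and the value at the rightmost point.

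The correct bookkeeping is as follows. Inserting at $P$ creates a dinv exactly with the labels on diagonal $h$, and the labels on diagonal $h-1$ smaller than $\sigma_m$, that lie after $P$ along the path. The remaining cases give nothing:
\begin{itemize}
\item a smaller label whose north step ends at $P$ is in the same column, so it gives no dinv;
\item larger diagonal-$(h-1)$ labels never give a dinv;
\item diagonals $\ge h+1$ are still empty, because they belong to later runs.
\end{itemize}
By the matching, these relevant labels and the suitable points alternate along the path. Each relevant label is immediately followed by its own suitable point, with the origin first when $h=0$. So the $k$-th suitable point from the left has exactly $u_m-k$ relevant labels after it, and avoids $(m-1)-(u_m-k)$ inversions. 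This gives $m-u_m,\dots,m-1$ from left to right.
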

\begin{proof}
	The key observation is that the suitable points along the diagonal $y=x+p_{\sigma_m}$ are precisely the points where our insertion procedure would sit $\sigma_m$ on top of a smaller label on the diagonal $y=x+p_{\sigma_m}-1$, or right after\footnote{In the case $p_{\sigma_m} = 0$ it might also be at the beginning of the path: this will be counted by the run $R_{-1} = 0$.} a (necessarily bigger) label on the diagonal $y=x+p_{\sigma_m}$. Since by Remark~\ref{rem:Rpsigmam} we have $\sigma_m\in R_{p_{\sigma_m}}$, this gives precisely $u_m$ suitable points. Now the final statement on the number of diagonal inversions ``avoided'' by these insertions follows easily from their definitions.
\end{proof}

We can finally prove our first lemma.
\begin{proof}[Proof of Lemma~\ref{lem:well_def}]
	It follows immediately by combining Lemma~\ref{lem:cl_co_area} and Lemma~\ref{lem:cl_avoid_dinv}.
\end{proof}

\section{Proof of $\psi_n=\phi_n^{-1}$}

In order to prove that $\psi_n$ is the inverse of $\phi_n$, the following remark is crucial.
\begin{remark}\label{rem:dlambda}
	Given $g\in \PF_n$, in our iterative procedure defining $\psi_n(g)$, the insertions of $\sigma_{m'}$ for $m'>m$ do not affect the number of labels $\sigma_j$ with $j<m$ which $\sigma_m$ creates a diagonal inversion with. This is clear, since the insertion of $\sigma_{m'}$ does not change their relative positions, i.e.\ the conditions (A) and (B) in Definition~\ref{def:cl_dinv}.
\end{remark}

\begin{theorem}
	The function $\psi_n$ is the inverse of $\phi_n$.
\end{theorem}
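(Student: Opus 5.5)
We will show $\phi_n\circ\psi_n=\mathrm{id}_{\PF_n}$ on all of $\PF_n$. Since $\PF_n$ is finite and $\psi_n$ is well defined by Lemma~\ref{lem:well_def}, this gives injectivity of $\psi_n$, hence bijectivity of both maps and $\psi_n=\phi_n^{-1}$.

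Fix $g\in\PF_n$, with $\sigma_{\pmaj}(g)=\sigma_1\cdots\sigma_n$, and set $f:=\psi_n(g)$. The first step is to show that $\wdread(f)=\sigma_1\sigma_2\cdots\sigma_n$, i.e.\ that the insertion order coincides with the dinv reading order of the final path.
\begin{itemize}
\item By construction $a_{\sigma_m}(f)=p_{\sigma_m}(g)$, and insertions never change row-area contributions. The $p_{\sigma_m}$ are weakly increasing in $m$, since runs go left to right (Remark~\ref{rem:Rpsigmam}). So labels on lower diagonals come first.
\item Inside one run $R_h$ the labels are decreasing, and all of them lie on the diagonal $y=x+h$. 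We must check that they are read left to right in $f$, i.e.\ that each newly inserted $\sigma_m$ lands to the right of the previously inserted labels of $R_h$. By Lemma~\ref{lem:cl_avoid_dinv} the suitable points are those sitting on a smaller label of diagonal $h-1$, or right after a (necessarily bigger) label of diagonal $h$. The previously inserted labels of $R_h$ are all bigger than $\sigma_m$, so the point just after any of them is suitable. We still need that the chosen point is to the right of all of them.
\item This is where the lower bound $c_{\sigma_m}\ge m-u_m$ of Lemma~\ref{lem:cl_co_area} would be used. One could alternatively argue that the insertion order within a diagonal is irrelevant, but I expect the left-to-right claim to be the main obstacle. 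I would try to refine Lemma~\ref{lem:cl_avoid_dinv}, arguing that the leftmost suitable point already lies right of the last inserted label of $R_h$ because points further left would sit above a bigger label. If this fails, I would instead prove directly that $d_\lambda(f)$ depends only on earlier-inserted labels.
\end{itemize}

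Granting that $\wdread(f)=\sigma_1\cdots\sigma_n$, the second step is to compute $d_{\sigma_m}(f)$. By Remark~\ref{rem:dlambda}, the number of $\sigma_j$ with $j<m$ forming a diagonal inversion with $\sigma_m$ is fixed at the moment of insertion. At that moment $\sigma_m$ avoids exactly $c_{\sigma_m}(g)$ of the $m-1$ labels present, so
\[d_{\sigma_m}(f)=(m-1)-c_{\sigma_m}(g).\]
Plugging this into \eqref{eq:cl_phi_def} gives
\[(\phi_n(f))(\sigma_m)=m-d_{\sigma_m}(f)=c_{\sigma_m}(g)+1=g(\sigma_m).\]
Hence $\phi_n(\psi_n(g))=g$.
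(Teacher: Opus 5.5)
\paragraph{The first step is false.} The insertion order $\sigma_1\sigma_2\cdots\sigma_n$ is in general \emph{not} the dinv reading word of $f=\psi_n(g)$, and everything after that point depends on it.

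Take $n=2$ and $g(1)=g(2)=1$. Then $\sigma_{\pmaj}(g)=2\,1$, a single run with $c_2=c_1=0$. At Step~2 the label $1$ must be inserted at the initial point of the path, to the left of $2$, since inserting after $2$ would avoid one inversion. This gives $f(1)=1$, $f(2)=2$ and $\wdread(f)=1\,2$. The same happens in the paper's running example: $6$ lands to the left of $7$ and $4$ to the left of $5$, so $\wdread(\psi_8(g))=6\,3\,7\,1\,4\,5\,8\,2\neq\sigma_{\pmaj}(g)$.

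Your proposed refinement of Lemma~\ref{lem:cl_avoid_dinv} fails for a concrete reason. A suitable point to the left of the earlier labels of $R_h$ can sit on a \emph{smaller} label of the diagonal $y=x+h-1$, or be the starting point of the path when $h=0$. Lemma~\ref{lem:cl_co_area} allows the leftmost such point (above, $c_1=0=m-u_m$).

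As a result, both formulas in your second step are wrong.
\begin{itemize}
\item $(\phi_n(f))(\sigma_m)=m-d_{\sigma_m}(f)$ requires $\sigma_m=w_m$.
\item $d_{\sigma_m}(f)=(m-1)-c_{\sigma_m}(g)$ also fails. In the $n=2$ example, $d_2(f)=1$ because $\{1,2\}$ is a type (A) inversion, whereas your formula gives $0$.
\end{itemize}
The two errors happen to cancel in that example, but that is not a proof. Your fallback, that $d_\lambda(f)$ depends only on earlier-inserted labels, is false for the same reason.

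\paragraph{The missing idea.} What is preserved is the number of \emph{non}-inversions, not the number of inversions. Suppose $\sigma_m=w_k$.
\begin{itemize}
\item On the $\phi_n$ side, $(\phi_n(f))(w_k)-1=(k-1)-d_{w_k}(f)=\#Y$. Here $Y$ is the set of labels preceding $w_k$ in $\wdread(f)$ that do not form a diagonal inversion with it.
\item On the $\psi_n$ side, by construction and Remark~\ref{rem:dlambda}, $c_{\sigma_m}(g)=\#X$. Here $X$ is the set of $\sigma_j$ with $j<m$ that do not form a diagonal inversion with $\sigma_m$.
\end{itemize}
What you need to prove is $X=Y$.
\begin{itemize}
\item Labels of other runs lie on other diagonals. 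So they are on the same side of $\sigma_m$ in both orders, and belong to $X$ exactly when they belong to $Y$.
\item Inside $R_h$, the two orders disagree only on two kinds of labels: bigger labels inserted earlier but lying to the right of $\sigma_m$, and smaller labels inserted later but lying to the left of $\sigma_m$.
\item In both of those cases the smaller label of the pair sits in the column further left. So the pair satisfies (A), and the label belongs to neither $X$ nor $Y$.
\end{itemize}
This is the case analysis in the paper's proof. With it in place of your first step, your overall strategy of proving $\phi_n\circ\psi_n=\mathrm{id}$ on the finite set $\PF_n$ goes through.
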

\begin{proof}
	Since these are maps from the finite set $\PF_n$ into itself, it is enough to prove that $\phi_n\circ \psi_n$ is the identity function of $\PF_n$.	\\
	Consider a parking function $g \in \PF_n$. We want to show that $\big(\phi_n \circ \psi_n(g)\big)(\lambda) = g(\lambda)$ for any given label $\lambda \in [n]$. Set $f = \psi_n(g) \in \PF_n$.\\
	Consider the pmaj word $\sigma_{\pmaj}(g) = \sigma_1\sigma_2\cdots\sigma_n$ of $g$ and the reading word $\wdread(f) = w_1w_2\cdots w_n$ of $f$.\\
	By construction of $f = \psi_n(g)$, for every $m\in [n]$ we have
	\begin{equation}\label{eq:cl_pres_area}
		p_{\sigma_m}(g) = a_{\sigma_m}(f).
	\end{equation}
	Moreover, given $m \in [n]$, if we set
	\[
		X:=\{ \lambda \in \{\sigma_1,\sigma_2,\dots, \sigma_{m-1}\} \ | \ \lambda\text{ ``avoids'' dinv with }\sigma_m\text{ in }f \big\},
	\]
	then using Remark~\ref{rem:dlambda}, by construction of $\psi_n$ we have
	\begin{align}\label{eq:cl_coarea_phipsi}
		g(\sigma_m) - 1 = c_{\sigma_m}(g) = \# X.            
	\end{align}
	Suppose that $\sigma_m = w_k$, and let $\sigma_m \in R_h$.
	\begin{remark}\label{rem:Rhaf}
	Combining Remark~\ref{rem:Rpsigmam} and~\eqref{eq:cl_pres_area} we have $\sigma_m\in R_{h}$ if and only if $h=a_{\sigma_m}(f)$.
	\end{remark}
	\noindent We want to show that $X$ coincides with the set
	\[
		Y \defeq \big\{ \lambda \in \{w_1,\dots,w_{k-1}\} \ | \ \lambda\text{ ``avoids'' dinv with }w_k = \sigma_m\text{ in }f \big\}.
	\]
	There are three cases:
	\begin{itemize}
		\item $\lambda \in R_{h'}$ with $h' > h=a_{\sigma_m}(f)$. In this case, by Remark~\ref{rem:Rhaf}, the number $\lambda$ occurs to the right of $\sigma_m=w_k$ in both $\sigma_{\pmaj}(g)$ and $\wdread(f)$. Hence we have $\lambda \notin X$ and $\lambda \notin Y$, because $\lambda \notin \{\sigma_1,\dots,\sigma_{m-1}\}$ and $\lambda \notin \{w_1,\dots,w_{k-1}\}$.
		
		\item $\lambda \in R_{h'}$ with $h' < h=a_{\sigma_m}(f)$. In this case, by Remark~\ref{rem:Rhaf}, the number $\lambda$ occurs to the left of $\sigma_m=w_k$ in both $\sigma_{\pmaj}(g)$ and $\wdread(f)$. Hence clearly $\lambda\in X$ if and only if $\lambda\in Y$, since the ``avoidance'' condition is the same in both $X$ and $Y$.
		\item $\lambda \in R_{h}$ (recall $a_{\lambda}(f) = h$). We have two cases:
		\begin{itemize}
			\item[$\triangleright$] $\lambda > \sigma_m = w_k$: in this case $\lambda$ occurs to the left of $\sigma_m = w_k$ in $\sigma_{\pmaj}(g)$. So, since $\sigma_m<\lambda$ and $a_{\sigma_m}(f)=a_{\lambda}(f)=h$, we have that $\lambda\in X$ if and only if $f(\lambda)<f(\sigma_m)$. But since $a_{\sigma_m}(f)=a_{\lambda}(f)$, and $\lambda>\sigma_m$, we have that $\lambda \in Y$ if and only if $f(\lambda)<f(\sigma_m)$ (so that $\lambda$ occurs to the left of $\sigma_m=w_k$ in $\wdread(f)$).
			\item[$\triangleright$] $\lambda < \sigma_m = w_k$: in this case $\lambda$ occurs to the right of $\sigma_m$ in $\sigma_{\pmaj}(g)$, hence $\lambda\notin X$. On the other hand, if $\lambda$ occurs to the right $w_k=\sigma_m$ in $\wdread(f)$, then clearly $\lambda \notin Y$; while if it occurs to the left of $w_k=\sigma_m$ in $\wdread(f)$, then we must have $f(\lambda) < f(\sigma_m)$ since $a_{\sigma_m}(f)=a_{\lambda}(f)=h$. Together with $\lambda<\sigma_m=w_k$, this gives $\{\lambda, w_k\} \in \Dinv(f)$, hence $\lambda \notin Y$ as well.
		\end{itemize}
	\end{itemize}
	This shows that $X = Y$. Restarting from Equation~\eqref{eq:cl_coarea_phipsi}, using Equations~\eqref{eq:cl_def_di} and~\eqref{eq:cl_phi_def} from the construction of $\phi_n$, we obtain that for all $m \in [n]$:
	\[
		g(\sigma_m) - 1 = \#X = \#Y = (m-1) - d_{\sigma_m}(f) = \big(\phi_n(f)\big)(\sigma_m) - 1,
	\]
	which shows that $g = \phi_n(f) = \phi_n\circ\psi_n(g)$.\\
	This completes the proof of the theorem.
\end{proof}

\section{Further properties of $\phi_n$}

First of all, we deduced the following property that we mentioned earlier in this article.

\begin{theorem}
	For every $f\in \PF_n$ we have
	\begin{equation}
		\pmaj(\phi_n(f))=\area(f).
	\end{equation}
\end{theorem}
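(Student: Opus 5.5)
The plan is to deduce the statement from the theorem $\psi_n=\phi_n^{-1}$ together with property \eqref{eq:cl_pres_area} built into the construction of $\psi_n$. Since $\phi_n$ is a bijection of $\PF_n$ with inverse $\psi_n$, every $f\in\PF_n$ can be written as $f=\psi_n(g)$ with $g=\phi_n(f)$. It therefore suffices to prove that $\area(\psi_n(g))=\pmaj(g)$ for every $g\in\PF_n$.

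For the latter I will compare the statistics label by label. By Definition~\ref{def:cl_area}, $\area(\psi_n(g))=\sum_{\lambda\in[n]}a_\lambda(\psi_n(g))$. By Definition~\ref{def:cl_pmaj}, $\pmaj(g)=\sum_{\lambda\in[n]}p_\lambda(g)$. The construction of $\psi_n$ inserts each label $\sigma_m$ on the diagonal $y=x+p_{\sigma_m}(g)$, so its row-area contribution at the moment of insertion is $p_{\sigma_m}(g)$.

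The only point that needs care is that later insertions do not change this row-area contribution. Inserting a north step immediately followed by an east step at a lattice point of the path has two effects: it shifts the portion of the path after that point by $(1,1)$, which preserves the distance from the main diagonal of every existing north step, and it leaves the portion before that point unchanged. Hence the final row-area contribution of $\sigma_m$ in $\psi_n(g)$ is still $p_{\sigma_m}(g)$; this is exactly \eqref{eq:cl_pres_area}.

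Summing over $m$ gives $\area(\psi_n(g))=\sum_m p_{\sigma_m}(g)=\pmaj(g)$. Substituting $g=\phi_n(f)$ then yields $\pmaj(\phi_n(f))=\area(\psi_n(\phi_n(f)))=\area(f)$. I expect no real obstacle: the substantive work, namely well-definedness (Lemma~\ref{lem:well_def}) and $\psi_n=\phi_n^{-1}$, has already been done. The only step to state explicitly is the invariance of row-area under subsequent insertions described above.
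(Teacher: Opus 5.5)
Your proposal is correct and follows the paper's proof: you reduce via $\psi_n=\phi_n^{-1}$ to $\area(\psi_n(g))=\pmaj(g)$, then sum \eqref{eq:cl_pres_area} over all labels. Your explicit check that a later insertion shifts the part of the path after it by $(1,1)$, and so preserves row-area contributions, justifies what the paper only asserts ``by construction''.
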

\begin{proof}
	Since $\psi_n$ is the inverse of $\phi_n$, we can prove the equivalent statement that for every $g\in \PF_n$
	\[
		\area(\psi_n(g))=\pmaj(g).
	\]
	But this is a straightforward consequence of Equation~\eqref{eq:cl_pres_area}.
\end{proof}
There is another property of $\phi_n$ that is worth mentioning here. The proof of the following lemma is straightforward and it is left to the reader.
\begin{lemma}
	Let $f\in \PF_n$ and $i\in [n-1]$. Let $\wdread(f)=w_1w_2\cdots w_n$, and let $\{i,i+1\}=\{w_k,w_{k+r}\}$  with $r>0$. Then
	\[
		\{\lambda\in\{w_1,\dots,w_{k-1}\}\mid \lambda\text{ ``avoids'' dinv with }w_k \}\subseteq \{\lambda\in\{w_1,\dots,w_{k-1}\}\mid \lambda\text{ ``avoids'' dinv with }w_{k+r} \}.
	\]
\end{lemma}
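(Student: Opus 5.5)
The plan is a direct case analysis. First I will rewrite ``avoids dinv'' in terms of relative position in the dinv reading order. Then I will use that $w_k$ and $w_{k+r}$ are consecutive integers, so every third label compares to both of them in the same way.

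\emph{Step 1: characterise avoidance.} Fix a label $\mu$ and a label $\lambda$ with $\lambda\lread\mu$; by definition of $\lread$ we have $a_\lambda(f)\le a_\mu(f)$. I will read off from Definition~\ref{def:cl_dinv} when $\{\lambda,\mu\}\notin\Dinv(f)$.
\begin{enumerate}[label=(\alph*)]
\item If $a_\lambda(f)=a_\mu(f)$, then $f(\lambda)<f(\mu)$. Two labels in the same column lie in different rows with different area, so $f(\lambda)=f(\mu)$ cannot happen here. Hence $\lambda$ avoids dinv with $\mu$ if and only if $\lambda>\mu$.
\item If $a_\mu(f)=a_\lambda(f)+1$, only condition (B) can apply. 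So $\lambda$ avoids dinv with $\mu$ if and only if $\lambda>\mu$ or $f(\lambda)\le f(\mu)$. The case $f(\lambda)=f(\mu)$ is harmless: it means $\lambda$ sits just below $\mu$ in the same column, so $\lambda<\mu$ and (B) fails anyway.
\item If $a_\mu(f)\ge a_\lambda(f)+2$, then $\lambda$ always avoids dinv with $\mu$.
\end{enumerate}
Since $\lambda\lread\mu$ forces $a_\mu(f)\ge a_\lambda(f)$, these three cases are exhaustive.

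\emph{Step 2: the case analysis.} Write $x=w_k$ and $y=w_{k+r}$, so $\{x,y\}=\{i,i+1\}$ and $x\lread y$. In particular $a_x(f)\le a_y(f)$, and if the areas are equal then $f(x)<f(y)$. Take $\lambda\in\{w_1,\dots,w_{k-1}\}$ avoiding dinv with $x$. Then $\lambda\lread y$ automatically, and $\lambda\notin\{i,i+1\}$. Hence
\[
\lambda>x \iff \lambda>y,
\]
which is the only place where consecutiveness is used. I split according to $a_y(f)-a_x(f)$.
\begin{itemize}
\item If $a_y(f)\ge a_x(f)+2$, then $a_y(f)\ge a_\lambda(f)+2$, so (c) applies.
\item If $a_y(f)=a_x(f)+1$, consider $a_\lambda(f)$.
\begin{itemize}
\item If $a_\lambda(f)=a_x(f)$, then (a) gives $\lambda>x$, hence $\lambda>y$, and (b) applies to the pair $(\lambda,y)$.
\item If $a_\lambda(f)<a_x(f)$, then $a_y(f)\ge a_\lambda(f)+2$ and (c) applies.
\end{itemize}
\item If $a_y(f)=a_x(f)$ (so $f(x)<f(y)$), consider $a_\lambda(f)$.
\begin{itemize}
\item If $a_\lambda(f)=a_x(f)$, then (a) gives $\lambda>x$, hence $\lambda>y$, and (a) again gives avoidance with $y$.
\item If $a_\lambda(f)=a_x(f)-1$, then (b) gives $\lambda>x$ or $f(\lambda)\le f(x)$. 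This yields $\lambda>y$ or $f(\lambda)\le f(x)<f(y)$, so (b) gives avoidance with $y$.
\item If $a_\lambda(f)\le a_x(f)-2$, (c) applies.
\end{itemize}
\end{itemize}
In every case $\lambda$ avoids dinv with $w_{k+r}$, which proves the stated inclusion.

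\emph{Main obstacle.} The only delicate point is Step 1: getting the avoidance criterion exactly right. This means being careful about which label is the smaller one in (B), and handling the degenerate same-column situation $f(\lambda)=f(\mu)$ correctly. Once that characterisation is in place, the case analysis in Step 2 is mechanical.
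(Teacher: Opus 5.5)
Your proof is correct: the characterisation of avoidance in Step 1, split by whether $a_\mu(f)-a_\lambda(f)$ is $0$, $1$, or at least $2$, is accurate, and the case analysis in Step 2 is exhaustive, using only that a third label $\lambda\notin\{i,i+1\}$ satisfies $\lambda>i \iff \lambda>i+1$. The paper gives no proof, leaving the lemma to the reader as straightforward, and your direct case check supplies exactly that routine verification.
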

The following corollary is an immediate consequence of the previous lemma and the definition of $\phi_n$.
\begin{corollary}\label{cor:phininequalities}
	Let $f\in \PF_n$ and $i\in [n-1]$. Let $\wdread(f)=w_1w_2\cdots w_n$, and let $i=w_k$ and $i+1=w_{k+r}$. If $r>0$, then we have
	\[
		\phi_n(f)(i)\leq \phi_n(f)(i+1),
	\]
	while if $r<0$, then
	\[
		\phi_n(f)(i+1)\leq 1+\phi_n(f)(i).
	\]
\end{corollary}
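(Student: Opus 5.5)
The plan is to express both values of $\phi_n(f)$ as sizes of ``avoidance sets'' and then read the two inequalities off the preceding lemma. For a position $j\in[n]$ write
\[
A_j \defeq \{\lambda\in\{w_1,\dots,w_{j-1}\}\mid \lambda\text{ ``avoids'' dinv with }w_j\}.
\]
Every pair $\{w_i,w_j\}$ with $i<j$ is either in $\Dinv(f)$ or ``avoided''. So by \eqref{eq:cl_def_di} we have $\#A_j=(j-1)-d_{w_j}(f)$, and by \eqref{eq:cl_phi_def} this gives $(\phi_n(f))(w_j)=1+\#A_j$. Both claims therefore reduce to comparing the cardinalities of two such sets. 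The one further observation needed is monotonicity: for $j\le j'$, the set of labels among $w_1,\dots,w_{j-1}$ avoiding dinv with $w_{j'}$ is contained in $A_{j'}$, since $A_{j'}$ is the same condition taken over a longer prefix.

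\emph{Case $r>0$.} Here $i=w_k$ and $i+1=w_{k+r}$. The previous lemma, applied with $\{i,i+1\}=\{w_k,w_{k+r}\}$, gives that $A_k$ is contained in the set of labels among $w_1,\dots,w_{k-1}$ avoiding dinv with $w_{k+r}$. By monotonicity that set lies in $A_{k+r}$. Hence $\#A_k\le\#A_{k+r}$, which is exactly $\phi_n(f)(i)\le\phi_n(f)(i+1)$.

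\emph{Case $r<0$.} Set $s=-r>0$, so that $i+1=w_{k}$ and $i=w_{k+s}$ after relabelling the positions. The previous lemma is stated for the unordered pair $\{i,i+1\}$, so we apply it again with $w_k=i+1$ and $w_{k+s}=i$. This gives $A_k\subseteq\{\lambda\in\{w_1,\dots,w_{k-1}\}\mid\lambda\text{ avoids dinv with }i\}$, and by monotonicity this is contained in $A_{k+s}$. Thus $\#A_k\le\#A_{k+s}$, i.e.\ $\phi_n(f)(i+1)\le\phi_n(f)(i)$, which certainly implies the weaker stated bound $\phi_n(f)(i+1)\le 1+\phi_n(f)(i)$.

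The main obstacle is checking that the previous lemma really does hold in the orientation used for $r<0$. The stated $+1$ suggests the authors may intend a slightly weaker inclusion there, possibly ``up to one element'', with the single exceptional label being $w_k$ itself or the other element of the pair $\{i,i+1\}$. If on verification the inclusion only holds up to one such element, the same argument still applies: the sizes differ by at most one, which is precisely the slack the corollary allows.
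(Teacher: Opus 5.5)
Your argument is correct and follows the paper's own proof, which just calls the corollary immediate from the preceding lemma and the definition of $\phi_n$. As you say, $\phi_n(f)(w_j)$ is $1$ plus the number of earlier labels avoiding dinv with $w_j$; the lemma is stated for the unordered pair $\{i,i+1\}$, so it applies in both orientations; and enlarging the prefix finishes the argument.

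Your closing speculation reads the $+1$ backwards, though. For $r<0$ the printed bound is evidently a misprint for the strict inequality $1+\phi_n(f)(i+1)\le\phi_n(f)(i)$. This is what Proposition~\ref{prop:phinshuffle} actually uses (``strictly to the right''). Your argument yields it with one more observation: the label $i+1$ itself precedes $i$ in $\wdread(f)$ and avoids dinv with it, since the larger label comes first in reading order and so neither (A) nor (B) can hold. Hence $i+1$ is counted for $i$, but it lies outside the prefix set supplied by the lemma.
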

The relevance of these results is in the following consequences.\\
We define the \emph{pmaj reading word} $\wpread(f)= w_1w_2\cdots w_n$ where $w_i$ is simply the the label of $f$ in its $i$-th row (counting as usual the rows from bottom to top, from $1$ to $n$).\\
For example the $f$ in Figure~\ref{fig:cl_exa_phi} (on the left) has $\wpread(f)= 4\, 5\, 1\, 2\, 6\, 3$.

\smallskip

The property of $\phi_n$ that we want to mention is the following.

\begin{proposition}\label{prop:phinshuffle}
	Let $f\in \PF_n$ and $i\in [n-1]$. Then the relative position of $i$ and $i+1$ in $\wdread(f)$ is the same as in $\wpread(\phi_n(f))$.
\end{proposition}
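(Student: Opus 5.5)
The plan is to read off the relative position of $i$ and $i+1$ in $\wpread(g)$, where $g=\phi_n(f)$, directly from the values $g(i)$ and $g(i+1)$, and then compare with Corollary~\ref{cor:phininequalities}.

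Recall that in a labelled Dyck path the rows are read bottom to top. Labels in a column with smaller index lie in lower rows, and labels within the same column increase from bottom to top. Hence for two labels $\lambda<\mu$ we have that $\lambda$ precedes $\mu$ in $\wpread(g)$ if and only if $g(\lambda)\le g(\mu)$. Applied to $\lambda=i$ and $\mu=i+1$: the label $i$ precedes $i+1$ in $\wpread(g)$ if and only if $g(i)\le g(i+1)$, and $i+1$ precedes $i$ if and only if $g(i+1)<g(i)$, i.e.\ $g(i+1)\le g(i)-1$.

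Write $i=w_k$ and $i+1=w_{k+r}$ in $\wdread(f)$. If $r>0$, Corollary~\ref{cor:phininequalities} gives $g(i)\le g(i+1)$, so $i$ precedes $i+1$ in $\wpread(g)$, as desired.

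The case $r<0$ is the main obstacle. We need the strict inequality $g(i+1)<g(i)$, but the corollary as stated only gives $g(i+1)\le 1+g(i)$. So I would go back to the lemma preceding Corollary~\ref{cor:phininequalities} and use the explicit formula $g(w_j)=j-d_{w_j}(f)=1+\#\{\text{labels before } w_j \text{ in } \wdread(f) \text{ avoiding dinv with } w_j\}$.

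Set $j=k+r<k$, so $i+1=w_j$ comes first. The lemma gives that the avoiders of $i+1$ among $w_1,\dots,w_{j-1}$ are a subset of the avoiders of $i$ among those same labels. In addition, $i+1$ itself sits before $i$, and the pair $\{i,i+1\}$ must be checked directly. Suppose it formed a dinv with $i$ earlier in the reading order. Since $i+1$ precedes $i$, either $a_{i+1}<a_i$ or ($a_{i+1}=a_i$ and $f(i+1)<f(i)$). Conditions (A) and (B) for the pair $\lambda=i<\mu=i+1$ require $a_i\le a_{i+1}$; so we are forced into $a_i=a_{i+1}$, and then (A) needs $f(i)<f(i+1)$, a contradiction. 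Hence $i+1$ avoids dinv with $i$. The labels strictly between positions $j$ and $k$ only add avoiders for $i$.

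Therefore
\[
g(i)-1\ \ge\ \#\{\text{avoiders of } i+1\}+1\ =\ g(i+1),
\]
so $g(i+1)<g(i)$ strictly, and $i+1$ precedes $i$ in $\wpread(g)$. This matches their order in $\wdread(f)$ and completes the argument. The one point that still needs care is checking that the lemma's inclusion indeed applies when $i+1$ comes first, which is exactly the symmetric statement of that lemma with $\{i,i+1\}=\{w_k,w_{k+r}\}$.
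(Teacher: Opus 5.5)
Your proof is correct and follows the paper's route: you translate the relative order of $i$ and $i+1$ in $\wpread(\phi_n(f))$ into a comparison of the columns $\phi_n(f)(i)$ and $\phi_n(f)(i+1)$, and then use Corollary~\ref{cor:phininequalities} and the lemma preceding it. You are also right that the $r<0$ half of that corollary is misstated: the paper's proof actually uses the strict inequality $\phi_n(f)(i+1)<\phi_n(f)(i)$, and your observation supplies exactly that strictness, namely that $i+1$, when it precedes $i$ in $\wdread(f)$, never forms a diagonal inversion with $i$.
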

\begin{proof}
	If $i$ occurs to the left of $i+1$ in $\wdread(f)$, then Corollary~\ref{cor:phininequalities} says that $i$ occurs in $\phi_n(f)$ in a column weakly to the left of the column containing $i+1$. Hence clearly $i+1$ must occur in row that is higher than the row containing $i$, i.e.\ $i$ occurs to the left of $i+1$ in $\wpread(\phi_n(f))$.

	Similarly, if $i$ occurs to the right of $i+1$ in $\wdread(f)$, then Corollary~\ref{cor:phininequalities} says that $i$ occurs in $\phi_n(f)$ in a column strictly to the right of the column containing $i+1$. Hence clearly $i+1$ must occur in row that is lower than the row containing $i$, i.e.\ $i$ occurs to the right of $i+1$ in $\wpread(\phi_n(f))$.
\end{proof}
This property of $\phi_n$ has the following relevant consequence.

\medskip

Recall that a composition $\mu = (\mu_1,\mu_2,\dots,\mu_k)$ is simply a vector of positive integers, and we denote by $|\mu|:=\mu_1+\mu_2+\cdots+\mu_k$ its \emph{size}, and by $\ell(\nu):=k$ its \emph{length}.

Given two compositions $\mu = (\mu_1,\mu_2,\dots)$ and $\nu = (\nu_1,\nu_2,\dots)$ with $\lvert \mu \rvert + \lvert \nu \rvert=n$, let $K_{1} = \{n,n-1,\dots, n-\mu_1+1\}$, $K_{2} = \{n-\mu_1, n-\mu_1-1,\dots,n-\mu_1-\mu_2+1\}$, and so on, and let $I_{1} = \{1,2,\dots, \nu_1\}$, $I_{2} = \{\nu_1+1,\nu_1+2,\dots, \nu_1+\nu_2\}$, and so on. Notice that the sets $K_{1}, K_{2}, \dots$, $I_{1}, I_{2}, \dots$ form a partition of $[n]$.

Now let $\uparrow\!\! K_{i}$ be the word consisting of the elements of $K_{i}$ in increasing order: for example $\uparrow\!\! K_{1}=(n-\mu_1+1)\: (n-\mu_1+2) \cdots (n-1)\: n$.
Similarly, let $\downarrow\!\! I_{j}$ be the word consisting of the elements of $I_{j}$ in decreasing order: for example $\downarrow\!\! I_{1}=\nu_1 (\nu_1-1)\cdots 2\: 1$.

Consider the shuffle \[ \mathsf{W}(\mu;\nu) \coloneqq \uparrow\!\! K_{1} \shuffle \uparrow\!\! K_{2} \shuffle \cdots \shuffle \uparrow\!\! K_{\ell(\mu)} \shuffle \downarrow\!\! I_{1} \shuffle \downarrow\!\! I_{2} \shuffle \cdots \shuffle \downarrow\!\! I_{\ell(\nu)}, \]
i.e.\ the set of permutations in $\mathfrak{S}_n$ (in one-line notation) such that the letters in each $\uparrow\!\! K_{i}$ occur in the same relative order in $\uparrow\!\! K_{i}$, and similarly for each $\downarrow\!\! I_{j}$.

For example, if $\mu=(2,1)$ and $\nu=(1,2)$, then $|\mu|+|\nu|=3+3=6=n$, $K_1=\{5,6\}$, $K_2=\{4\}$, $I_1=\{1\}$, $I_2=\{2,3\}$, $\uparrow\!\! K_{1}=5\: 6$, $\uparrow\!\! K_{2}=4$, $\downarrow\!\! I_{1}=1$, $\downarrow\!\! I_{2}=3\: 2$, and $\mathsf{W}(\mu;\nu)=\mathsf{W}((2,1);(1,2))$ is the set of permutations in $\mathfrak{S}_6$ (in one-line notation), in which $5$ occurs to the left of $6$ and $3$ occurs to the left of $2$ (so that $\#\mathsf{W}((2,1);(1,2))=180$).

Given two compositions $\mu = (\mu_1,\mu_2,\dots)$ and $\nu = (\nu_1,\nu_2,\dots)$ with $\lvert \mu \rvert + \lvert \nu \rvert=n$, let $\PF(\mu;\nu)$ be the set of parking functions $f\in \PF_n$ such that $\wdread(f)\in \mathsf{W}(\mu;\nu)$, and let $\overline{\PF}(\mu;\nu)$ be the set of parking functions $f\in \PF_n$ such that $\wpread(f)\in \mathsf{W}(\mu;\nu)$.\\
Then Proposition~\ref{prop:phinshuffle} implies that $\phi_n$ restricts to a bijection from $\PF(\mu;\nu)$ to $\overline{\PF}(\mu;\nu)$, proving that
\[\sum_{f\in \PF(\mu;\nu)}q^{\dinv(f)}t^{\area(f)}=\sum_{f\in \overline{\PF}(\mu;\nu)}q^{\area(f)}t^{\pmaj(f)}.\]
It turns out that this polynomial equals $\langle\nabla e_n,e_{\mu}h_{\nu}\rangle$, and from this it is not hard to show that indeed the bijection $\phi_n$ explains combinatorially the equivalence of the original shuffle conjecture for $\nabla e_n$ (which was in terms of $(\dinv,\area)$), and its reformulation in terms of $(\area,\pmaj)$ due to Loehr and Remmel.

\medskip

We conclude this article with a speculative remark.
\begin{remark}\label{rem:speculation}
	It does not seem plausible to us that an easier (non recursive) definition of the inverse of $\phi_n$ is available. Indeed, let us consider the restriction of $\phi_n$ to the parking functions of area zero. These are essentially permutations, and here the dinv statistic reduces to the number of coinversions (i.e.\ the inversions of the reverse word, in one-line notation). In this case the map $\phi_n$ computes first what is essentially the Lehmer code of the reverse word (in one-line notation), and then reorders the entries so that the corresponding encoding labels appear in weakly increasing order. See Figure~\ref{fig:exampl_phi_area0} for an example.
	
	\begin{figure}[ht!]
		\centering
		\begin{tikzpicture}
			\node[scale=.85] at (0,0) {\begin{tikzpicture}
					\parkfunc{6}{1}{6,5,4,3,2,1}{4,5,1,2,6,3}{.7}
			\end{tikzpicture}};
			\node at (4.9,1.3) {$\rightarrow$
				\renewcommand{\arraystretch}{1.4}
				\begin{tabular}{c|cccccc}
					{\color{blue}$\wdread$} & {\color{blue}4} & {\color{blue}5} & {\color{blue}1} & {\color{blue}2} & {\color{blue}6} & {\color{blue}3}\\
					{\color{red}$(\phi_6(f))(w_i)$} & {\color{red}1} & {\color{red}1} & {\color{red}3} & {\color{red}3} & {\color{red}1} & {\color{red}4}
				\end{tabular}
			};
			\node at (6.2,0) {$\downarrow$ };
			\node at (4.9,-1.2) {\hspace{3.2cm}
				\renewcommand{\arraystretch}{1.4}
				\begin{tabular}{cccccc}
					{\color{blue}4} & {\color{blue}5} & {\color{blue}6} & {\color{blue}1} & {\color{blue}2} & {\color{blue}3}\\
					{\color{red}1} & {\color{red}1} & {\color{red}1} & {\color{red}3} & {\color{red}3} & {\color{red}4}
				\end{tabular}
				$\longleftrightarrow$};
			\node[scale=.85] at (10.8,0) {\begin{tikzpicture}
					\parkfunc{6}{1}{6,6,6,4,4,3}{4,5,6,1,2,3}{.7}
			\end{tikzpicture}};
		\end{tikzpicture}
		\caption{An example of the construction of $\phi_6(f)$ where $\area(f)=0$.}\label{fig:exampl_phi_area0}
	\end{figure}
	
	\noindent Hence the computation of the inverse of $\phi_n$ requires a ``decoding'', which seems intrinsically more involved. See Figure~\ref{fig:exampl_psi_area0}.
	
	\begin{figure}[ht!]
		\centering

		\begin{tikzpicture}
			\node[scale=.85] at (0,0) {\begin{tikzpicture}
					\parkfunc{6}{1}{6,6,6,4,4,3}{4,5,6,1,2,3}{.7}
				\end{tikzpicture}
			};
		\end{tikzpicture}

		\begin{tikzpicture}
			\node at (0,0) { $\longleftrightarrow$
				\renewcommand{\arraystretch}{1.4}
				\begin{tabular}{cccccc}
					{\color{blue}4} & {\color{blue}5} & {\color{blue}6} & {\color{blue}1} & {\color{blue}2} & {\color{blue}3}\\
					{\color{red}1} & {\color{red}1} & {\color{red}1} & {\color{red}3} & {\color{red}3} & {\color{red}4}
				\end{tabular} $\cdots \rightarrow$ \begin{tabular}{cccccc}
					{\color{blue}4} & {\color{blue}5} & {\color{blue}1} & {\color{blue}2} & {\color{blue}6} & {\color{blue}3}\\
					{\color{red}1} & {\color{red}1} & {\color{red}3} & {\color{red}3} & {\color{red}1} & {\color{red}4}
				\end{tabular}
			};
		\end{tikzpicture}
		\caption{``Decoding'' $\phi_6(f)$ to recover $f$ from Figure~\ref{fig:exampl_phi_area0}.}\label{fig:exampl_psi_area0}
	\end{figure}
\end{remark}

\bibliographystyle{amsalpha}
\bibliography{bibliogr}

\end{document}